\def\mid{|}
\def\xrightarrow{\longrightarrow}
\newcommand{\eqref}[1]{(\ref{#1})}
\newtheorem{proposition}{Proposition}[section]
\newtheorem{theorem}[proposition]{Theorem}
\newtheorem{lemma}[proposition]{Lemma}
\newtheorem{lemmaa}[remarkk]{Lemma}
\renewcommand\P{\mathbb{P}}
\newcommand\E{\mathbb{E}}
\newcommand{\ind}{\mathrm{d}}
\newcommand{\inp}{\mathrm{p}}
\newcommand{\as}{\mathrm{a.s.}}
\newcommand{\map}{\mathbf{m}}
\newcommand\mean{\kappa}
\newcommand\ga{\alpha}
\newcommand\gb{\beta}
\newcommand\gD{\Delta}
\newcommand\gG{\Gamma}
\newcommand\gs{\sigma}
\newcommand\eps{\varepsilon}
\newcommand{\tend}{\longrightarrow}
\newcommand\pto{\stackrel{\mathrm{p}}{\tend}}
\newcommand\bbR{\mathbb R}
\newcommand\xxi{\xi}
\newcommand\xS{S}
\newcommand\So{S\oo}
\newcommand\oo{^{(0)}}
\newcommand\Ge{\operatorname{Ge}}
\newcommand\NBi{\operatorname{NegBin}}
\newcommand\hxi{\xi}
\newcommand\cG{\mathcal{G}}
\newcommand\cM{\mathcal{M}}
\newcommand\cT{\mathcal{T}}
\newcommand\ex{\mathbf{e}}
\newcommand\brmin{U}
\newcommand\hp{p}
\newcommand\LL{\bar L}
\newcommand\br{\mathbf{b}}
\newcommand\h{\mathbf{h}}
\newcommand\vv{\hat v}
\begin{document}
\begin{frontmatter}

\title{Scaling limits of random planar maps with a~unique~large face\thanksref{T1}}
\runtitle{Scaling limits of random planar maps}

\begin{aug}
\author[A]{\fnms{Svante} \snm{Janson}\ead[label=e1]{svante.janson@math.uu.se}}
\and
\author[B]{\fnms{Sigurdur \"Orn} \snm{Stef\'ansson}\corref{}\ead[label=e2]{rudrugis@gmail.com}}
\address[A]{Department of Mathematics\\
Uppsala University\\
PO Box 480\\
SE-751~06 Uppsala\\
Sweden\\
\printead{e1}}
\address[B]{Department of Mathematics\\
Uppsala University\\
PO Box 480\\
SE-751~06 Uppsala\\
Sweden\\
and\\
Nordita, the Nordic Institute\\
\quad for Theoretical Physics\\
KTH Royal Institute of Technology\\
\quad and Stockholm University\\
Roslagstullsbacken 23\\
SE 106 91 Stockholm\\
Sweden\\
\printead{e2}}
\thankstext{T1}{Supported in part by the Knut and Alice Wallenberg Foundation.}
\affiliation{Uppsala University, and Uppsala University and Nordita}
\runauthor{S.~Janson and S.~\"O.~Stef\'ansson}
\end{aug}

\received{\smonth{12} \syear{2012}}
\revised{\smonth{6} \syear{2013}}

%
\begin{abstract}
We study random bipartite planar maps defined by assigning nonnegative
weights to each face of a map. We prove that for certain choices of
weights a unique large face, having degree proportional to the total
number of edges in the maps, appears when the maps are large. It is
furthermore shown that as the number of edges $n$ of the planar maps
goes to infinity, the profile of distances to a marked vertex rescaled
by $n^{-1/2}$ is described by a Brownian excursion. The planar maps,
with the graph metric rescaled by $n^{-1/2}$, are then shown to
converge in distribution toward Aldous' Brownian tree in the
Gromov--Hausdorff topology. In the proofs, we rely on the Bouttier--di
Francesco--Guitter bijection between maps and labeled trees and recent
results on simply generated trees where a unique vertex of a high
degree appears when the trees are large.
\end{abstract}

%
\begin{keyword}[class=AMS]
\kwd[Primary ]{05C80}
\kwd[; secondary ]{05C05}
\kwd{60F17}
\kwd{60J80}
\end{keyword}

\begin{keyword}
\kwd{Random maps}
\kwd{planar maps}
\kwd{mobiles}
\kwd{simply generated trees}
\kwd{continuum random tree}
\kwd{Brownian tree}
\end{keyword}
%

\end{frontmatter}

\section{Introduction}\label{s:intro}

A planar map is an embedding of a finite connected graph into the
two-sphere. Two planar maps are considered to be the same if one can be
mapped to the other with an orientation-preserving homeomorphism of the
sphere. The connected components of the complement of the edges of the
graph are called faces. The degree of a vertex is the number of edges
containing it and the degree of a face is the number of edges in its
boundary where an edge is counted twice if both its sides are incident to
the face.

In recent years, there has been great progress in understanding probabilistic
aspects of large planar maps; we refer to \cite{legall:notes} for a
detailed overview. One approach has been to study the scaling limit of a
sequence of random planar maps obtained by rescaling the graph distance on
the maps appropriately with their size and taking the limit as the size goes
to infinity. This notion of convergence involves viewing the maps as
elements of the set of all compact metric spaces, up to isometries, equipped
with the Gromov--Hausdorff topology. Le Gall showed that the scaling limit
of uniform $2p$-angulations (all faces of degree $2p$) exists along a
suitable subsequence and he furthermore showed that its topology is
independent of the subsequence and proved that its Hausdorff dimension
equals 4 \cite{legall:2007}. Subsequently, Le Gall and Paulin proved that
the limit has the topology of the sphere \cite{legall:2008}. Recently,
Miermont showed that in the case of uniform quadrangulations the
choice
of subsequence is superfluous and the scaling limit in fact equals the
so-called \textit{Brownian map} up to a scale factor \cite{miermont:unique}.
Le Gall proved independently that the same holds in the case of uniform
$2p$-angulations and uniform triangulations \cite{legall:unique}.

The present work is motivated by a paper of Le Gall and Miermont
\cite{legall:2011} where the authors study random planar maps which
roughly have the property that the distribution of the degree of a typical
face is in the domain of attraction of a stable law with index $\alpha
\in
(1,2)$. The model belongs to a class of models in which Boltzmann weights
are assigned to the faces of the map as we will now describe. Let
$\mathcal{M}^\ast_n$ denote the set of rooted and pointed bipartite planar
maps having $n$ edges: the root is an oriented edge $e = (e_{-},e_+)$ and
pointed means that there is a marked vertex $\rho$ in the planar map. The
assumption of pointedness is for technical reasons. For a planar map
$\map
\in\mathcal{M}^\ast_{n}$, denote the set of faces in $\map$ by
$F(\map)$
and denote the degree of a face $f\in F(\map)$ by $\deg(f)$. Note that
the assumption that $\map$ is bipartite is equivalent to
assuming that $\deg(f)$ is even for all $f$. Let
$(q_i)_{i\geq1}$ be a sequence of nonnegative numbers and assign a
Boltzmann weight
%
\begin{equation}
\label{wm} W(\map) = \prod_{f \in F(\map)} q_{\deg(f)/2}
\end{equation}
to $\map$. The probability distribution $\mu_n$ is defined by
normalizing $W(\map)$
%
\begin{equation}
\label{mumeasure} \mu_n(\map) = W(\map)/Z_{n},
\end{equation}
where
%
\begin{equation}
\label{eq:part1} Z_n = \sum_{\map'\in\mathcal{M}^\ast_{n}} W\bigl(
\map'\bigr)
\end{equation}
is referred to as the finite volume partition function. We will always
assume that $q_k > 0$ for some $k \geq2$ to avoid the trivial case
when all
faces have degree 2. Note that for a given random element in
$\mathcal{M}^\ast_{n}$ distributed by $\mu_n$ the marked vertex
$\rho$ is
uniformly distributed. The motivation for studying these distributions is
first of all related to questions of universality, namely, there is strong
evidence that under certain integrability condition on the weights
$q_i$ the
scaling limit of the maps distributed by $\mu_n$ is the Brownian map up
to a
scale factor \cite{marckert:2007}. Furthermore, the distributions are
closely related to distributions arising in certain statistical mechanical
models on random maps as is discussed in \cite{legall:2011}.

In \cite{legall:2011}, the authors show, among other things, that in
the large planar maps under consideration there are many
``macroscopic'' faces present and that the scaling limit, if it exists,
is different from the Brownian map. The presence of these large faces
in the scaling limit can be understood by considering the labeled trees
(mobiles) obtained from the planar maps using the Bouttier--di
Francesco--Guitter (BDG) bijection \cite{bouttier:2004}; see Section~\ref{s:bdg}. For convenience, we rewrite the sequence $(q_i)_{i\geq1}$
in terms of a new sequence $(w_i)_{i\geq0}$ defined by $w_0=1$ and
%
\begin{equation}
\label{eq:defp} w_i = \pmatrix{2i-1
\cr
i-1}q_{i},\qquad  i\ge1.
\end{equation}
%
Through yet another bijection between mobiles (with labels
removed) and trees which we introduce in Section~\ref{s:anotherb}, the
random trees corresponding to the maps distributed by $\mu_n$ can be viewed
as so-called \textit{simply generated trees} with weights $w_i$
assigned to
vertices of outdegree $i$. The choice of weights $(q_i)_{i\geq1}$ in
\cite{legall:2011} corresponds to choosing the weights $(w_i)_{i\geq
0}$ as
an offspring distribution of a critical Galton--Watson tree in the
domain of attraction of a stable law of index $\alpha\in(1,2)$. In
this case, the random trees converge, when
scaled appropriately, to the so-called stable tree with index $\alpha
$. It
follows from properties
of the BDG bijection that the large faces in the planar maps correspond
to individuals in the stable tree which have a macroscopic number of
offspring, that is, vertices of large degree.

It was originally noted in \cite{bialas:1996} and recently developed
further in \cite{janson:2012,janson:2011,jonsson:2011,kortchemski:2012}
that there exists a phase of simply generated trees where a unique
vertex with a degree proportional to the size of the tree appears as
the trees get large. This phenomenon has been referred to as
condensation. The purpose of this paper is to study the scaling limit
of planar maps corresponding to the condensation phase of the simply
generated trees. The large vertex in the trees will produce a large
face in the planar maps in analogy with the situation in \cite
{legall:2011}. The weights which we consider are chosen as explained
below. Define the generating function
%
\begin{equation}
\label{generating} g(x) = \sum_{i=0}^\infty
w_i x^i
\end{equation}
and denote its radius of convergence by $R$.\vspace*{1pt}
For $R>0$, define $\mean= \lim_{t\nearrow R} \frac{t g'(t)}{g(t)}$ and
for $R = 0$ let $\mean=0$. We will be interested in the following two
cases, (\ref{cond:A1}) and (\ref{cond:A2}), which are known to be the
only cases giving rise to condensation in the corresponding simply
generated trees (see, e.g., \cite{janson:2012}):
{\renewcommand{\theequation}{C\arabic{equation}}
\setcounter{equation}{0}
\begin{eqnarray}
\label{cond:A1} 
&0< R <\infty\quad\mbox{and}\quad \mean<1.&
\\
\label{cond:A2} 
&R = 0.&
\end{eqnarray}}
\hspace*{-3pt}In practice, we will consider the special case of (\ref{cond:A1}) when the
weights furthermore obey
\setcounter{equation}{5}
\begin{equation}
\label{A1} w_i = L(i)i^{-\beta}
\end{equation}
for some $\beta> 2$ and some slowly varying function $L$ and the
special case of (\ref{cond:A2}) when the weights furthermore obey
%
\begin{equation}
\label{A2} w_n = (n!)^\alpha
\end{equation}
with $\alpha> 0$.
[By \eqref{eq:defp} and Stirling's formula, \eqref{A1} is equivalent to
$q_i= L'(i) 4^{-i} i^{1/2-\gb}$ for another slowly varying function $L'$;
the exponential factor $4^i$ does not matter when we fix the number of edges
in the map, so we might as well take $q_i=L'(i) i^{1/2-\gb}$. However, we
will in the sequel use $w_i$ rather than $q_i$.]

We now introduce some formalism needed to state the results of the
paper. Let $\mathbb{M}^\ast$ be the set of all pointed compact metric
spaces viewed up to isometries, equipped with the pointed
Gromov--Hausdorff metric $d_{\mathrm{GH}}$ \cite{gromov:1999}. Let $\mathbf{e}$
be a standard Brownian excursion on $[0,1]$ and denote by $(\mathcal
{T_\mathbf{e}},\delta_\mathbf{e})$ Aldous' continuum random tree coded
by $\mathbf{e}$. Recall that $\mathcal{T}_\mathbf{e} = [0,1]/{\{
\delta
_\mathbf{e}=0\}}$ where
%
\begin{equation}
\delta_\mathbf{e}(s,t) = \mathbf{e}(s)+\mathbf{e}(t)-2\inf
_{s\wedge
t<u<s\vee t}\mathbf{e}(u)
\end{equation}
and by abuse of notation $\delta_\mathbf{e}$ is the induced distance on
the quotient; see, for example, \cite{aldous:1993,legall:notes}.
From here on, we will denote a random element in $\mathcal{M}^\ast_{n}$
distributed by $\mu_n$ by $(M_n,\rho)$; sometimes simplified to $M_n$.
The graph distance in $M_n$ will be denoted by $d_n$.

The main results of the paper are the following.
In Theorem~\ref{th:Dinv}, we prove that for the weights (\ref{A1}) and
(\ref{A2}), the limit as $n\rightarrow\infty$ of the profile of distances
in $M_n$ to the marked vertex $\rho$, rescaled by $(2(1-\mean)n)^{-1/2}$,
is described by a standard Brownian excursion; see Section~\ref{s:label} for
definitions and a precise statement.
Second, we prove the following
theorem, which describes the limit of all distances (not just to the root).

\begin{theorem} \label{th:main}
For the weights \eqref{A1} and \eqref{A2}, the random planar maps
$((M_n,\rho),(2(1-\mean)n)^{-1/2}d_n)$ distributed by $\mu_n$ and
viewed as
elements of $\mathbb{M}^\ast$ converge in distribution to
$((\mathcal{T}_\mathbf{e},\rho^\ast),\delta_\mathbf{e})$, where given
$\mathcal{T}_\mathbf{e}$, $\rho^*$ is a marked vertex chosen
uniformly at
random from $\mathcal{T}_\mathbf{e}$.
\end{theorem}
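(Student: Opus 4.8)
The plan is to combine the convergence of the distance profile to the marked vertex (Theorem~\ref{th:Dinv}, which I may assume) with a study of the underlying labeled trees (mobiles) produced by the Bouttier--di Francesco--Guitter bijection, and to show that in the condensation regime the large face contributes asymptotically nothing to the graph metric. The key structural fact is that under \eqref{A1} and \eqref{A2} the simply generated tree associated with $M_n$ has a single vertex of degree $\sim(1-\mean)n$ while the remainder of the tree, after this giant vertex is contracted, rescales like a critical Galton--Watson tree with finite variance, hence converges to the Brownian CRT $\mathcal{T}_\mathbf{e}$. My strategy is therefore to establish Gromov--Hausdorff convergence by exhibiting, for each $n$, a correspondence between $M_n$ and $\mathcal{T}_\mathbf{e}$ whose distortion, measured in the rescaled metric $(2(1-\mean)n)^{-1/2}d_n$, tends to $0$ in probability.

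\smallskip

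First I would recall the BDG encoding: vertices of $M_n$ (other than $\rho$) correspond to the labeled vertices of the mobile, and the graph distance $d_n(\rho,v)$ equals the label of $v$ shifted by its minimum, so that the \emph{profile} of distances to $\rho$ is exactly governed by the label process. Theorem~\ref{th:Dinv} already identifies the rescaled label process with $\mathbf{e}$. The real work is to pass from distances-to-$\rho$ to \emph{all} pairwise distances. Here I would follow the by-now standard route (as in Le~Gall's and Miermont's arguments for the CRT-type limit): bound $d_n(u,v)$ above and below in terms of label increments along the contour of the mobile, and show that the lower bound, which typically involves the infimum of the label process between the two times, matches the upper bound after rescaling. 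Concretely I would prove that
\begin{equation}
  \bigl(2(1-\mean)n\bigr)^{-1/2}\,d_n(u,v)\;\longrightarrow\;\delta_\mathbf{e}(s,t)
\end{equation}
in distribution, jointly over finitely many uniformly chosen vertices encoded by contour times converging to $(s,t)$, and then upgrade this finite-dimensional convergence to Gromov--Hausdorff convergence by a tightness/compactness argument together with the fact that a uniform vertex of $M_n$ corresponds to a uniform point of $\mathcal{T}_\mathbf{e}$.

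\smallskip

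The main obstacle, and the place where the condensation regime genuinely differs from \cite{legall:2011}, is controlling the unique large face: in the mobile it corresponds to the giant vertex of degree $\sim(1-\mean)n$, around which a macroscopic number of subtrees are attached. I would need to show that traversing this giant vertex costs $o(\sqrt n)$ in the graph metric, i.e.\ that the labels of the many children of the giant vertex, which all sit at essentially the same height, do not spread out on the scale $\sqrt n$; equivalently, that the large face does not create long geodesic shortcuts or large label oscillations. This should follow from the fact that the label increments across a single face are bounded in a way that, summed over the face, stays $\op(\sqrt n)$, so that after contracting the giant vertex the label process is a small perturbation of the one for a finite-variance Galton--Watson tree. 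Once this ``the large face is metrically negligible'' estimate is in hand, the remaining tree has finite offspring variance and its rescaled contour/label processes converge to $\mathbf{e}$ and $\delta_\mathbf{e}$ by the invariance principles underlying Theorem~\ref{th:Dinv}, and the Gromov--Hausdorff statement follows. I expect the delicate quantitative step to be precisely this uniform control of label fluctuations around the giant vertex, for which I would invoke the slowly-varying/factorial tail assumptions \eqref{A1}--\eqref{A2} and the recent condensation results on simply generated trees cited in the introduction.
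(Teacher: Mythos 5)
There is a genuine gap here, and it is not a technical one but a conceptual inversion of the mechanism. Your proposal asserts that ``the remainder of the tree, after this giant vertex is contracted, rescales like a critical Galton--Watson tree with finite variance, hence converges to the Brownian CRT,'' and that the main task is to show ``the large face is metrically negligible,'' i.e.\ that the labels of the children of the giant vertex ``do not spread out on the scale $\sqrt n$.'' This is backwards on both counts. Under \eqref{A1} and \eqref{A2} the subtrees $\tau_{n,i}$ hanging off the giant black vertex are asymptotically i.i.d.\ \emph{subcritical} (or trivial) trees of size $\op(n^{1-\gd})$; they do not assemble into anything CRT-like (indeed the paper remarks in Section~\ref{s:conc} that the underlying simply generated trees have no nontrivial scaling limit). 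The CRT in the limit comes precisely from the large face: the trimmed mobile consisting of the giant black vertex $s$ and its $\Delta_n$ white neighbours maps under the BDG bijection to a planar map $M_n^\star$ with a single face, i.e.\ a \emph{uniform rooted plane tree} with $\Delta_n\sim(1-\mean)n$ edges, and it is Aldous' theorem applied to this tree (Theorem~\ref{th:aldous}) that produces $(\mathcal{T}_\mathbf{e},\delta_\mathbf{e})$. Correspondingly, the labels $\ell_n(s_i)$ around the giant vertex \emph{do} fluctuate on the scale $\sqrt n$ --- that is exactly Lemma~\ref{l:inv1}, where $L_n^\star$ converges to $\sqrt{2(1-\mean)n}\,\mathbf{b}$ --- and these order-$\sqrt n$ fluctuations are what generate the limiting metric. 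What is negligible is everything \emph{except} the large face: the quantity that must be shown to be $\op(\sqrt n)$ is $\sup_i\sup_{v\in\tau_{n,i}}|\ell_n(v)-\ell_n(s_i)|$, the label spread \emph{within} each small pendant subtree (Lemma~\ref{L4}, via the uniform moment bound of Lemma~\ref{l:labeldist} applied to subtrees of size $o(n^{1-\gd})$).

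Because of this inversion, the program you sketch would not close: contracting the giant vertex leaves a star of microscopic subtrees, not a finite-variance critical tree, so there is no invariance principle to invoke for ``the remaining tree,'' and the estimate you flag as the delicate step (label oscillations around the giant vertex being $o(\sqrt n)$) is false. The correct architecture is the one the paper uses: (i) identify the boundary of the large face with a uniform plane tree $M_n^\star$ and get the CRT limit there; (ii) build an explicit correspondence $\mathcal{R}_n$ matching each vertex of $M_n$ to the boundary vertex $s_{\pi_n(i)}^\star$ of the subtree containing it; and (iii) bound the Gromov--Hausdorff distortion of $\mathcal{R}_n$ by a constant times $\sup_i\sup_{v\in\tau_{n,i}}|\ell_n(v)-\ell_n(s_i)|$ (Lemma~\ref{l:dis}), which requires a genuine geometric argument with successor geodesics in the BDG construction --- a step absent from your outline --- and then show this supremum is $\op(\sqrt n)$. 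Your general instinct to prove GH convergence via a correspondence with vanishing distortion is the right one, but the correspondence must be with the face-boundary tree, not with a contraction of the mobile, and the quantitative control must be over the pendant subtrees, not over the face.
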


Note that the root edge in $\cM_n$ is forgotten when we regard the
maps as
elements of $\mathbb{M}^\ast$. We can reroot the random tree
$\mathcal{T}_\mathbf{e}$
at the
randomly chosen point $\rho^*$; this gives a new random rooted tree, which
has the same distribution as $\mathcal{T}_\mathbf{e}$,
as shown by \cite{aldous:1991}, (20),
but the point $\rho^*$ is now the root.
Hence, the result in Theorem~\ref{th:main} can also be formulated as follows.

\begin{theorem}
For the weights \eqref{A1} and \eqref{A2}, the random planar maps
in Theorem~\ref{th:main} converge in distribution in $\mathbb{M}^*$ to
$((\mathcal{T}_\mathbf{e},0),\delta_\mathbf{e})$, where
$0$ denotes the root of $\mathcal{T}_\mathbf{e}$.
\end{theorem}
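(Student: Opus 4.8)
The plan is to deduce the second theorem directly from \refT{th:main} using the invariance-under-rerooting property of the continuum random tree that the authors have already cited. Concretely, \refT{th:main} identifies the Gromov--Hausdorff limit of the rescaled pointed maps as the pointed metric space $((\cTe,\rho^*),\gd_\ex)$, where $\rho^*$ is a uniformly chosen point of $\cTe$ (uniform with respect to the mass measure inherited from Lebesgue measure on $[0,1]$). The target of the present statement is the same limit, but with the distinguished point taken to be the canonical root $0$ of $\cTe$ instead of $\rho^*$. So the only thing to prove is that these two pointed random metric spaces have the same distribution as random elements of $\mathbb{M}^*$.

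The key step is therefore an appeal to the re-rooting invariance of $\cTe$. First I would recall, as the excerpt already notes via \cite[(20)]{aldous:1991}, that if one re-roots $\cTe$ at an independent uniformly chosen point $\rho^*$, the resulting rooted tree has the same law as the original rooted $\cTe$. Slightly more carefully, the statement needed is an \emph{identity in distribution of pointed metric spaces}: the pair consisting of the metric space $(\cTe,\gd_\ex)$ together with the marked point $\rho^*$ is equidistributed, as a random element of $\mathbb{M}^*$, with the pair consisting of $(\cTe,\gd_\ex)$ together with its root $0$. This is exactly the content of Aldous' re-rooting result once one observes that re-rooting does not change the underlying metric space, only the choice of base point; the metric $\gd_\ex$ and the abstract tree are intrinsic, and relabelling the base point from $0$ to $\rho^*$ (or vice versa) is precisely the re-rooting operation. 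Since limits in distribution are preserved under replacing the limiting random variable by an equidistributed one, combining this identity with \refT{th:main} yields convergence to $((\cTe,0),\gd_\ex)$.

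I expect the main (and only real) obstacle to be purely a matter of making the re-rooting statement precise at the level of \emph{pointed} Gromov--Hausdorff classes, rather than at the level of coded trees. Aldous' invariance is usually phrased for the excursion-coded tree, where re-rooting at a point $s\in[0,1]$ corresponds to the cyclic shift $\ex \mapsto \ex^{(s)}$ of the excursion; one must check that the induced map on pointed metric spaces sends the class $((\cTe,\gd_\ex),0)$ to $((\cT_{\ex^{(s)}},\gd_{\ex^{(s)}}),0)$ and that, under the uniform choice of $s$, this is the same as keeping $\cTe$ fixed and moving the base point to the image of $s$. This is a standard but slightly fiddly bookkeeping check; it introduces no new analysis beyond what \cite{aldous:1991} provides, so the proof is short. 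I would write it in one or two sentences: invoke re-rooting invariance to identify the two pointed laws, then transfer the conclusion of \refT{th:main} across this identity.
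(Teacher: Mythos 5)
Your proposal is correct and follows exactly the paper's route: the theorem is deduced from Theorem \ref{th:main} by invoking Aldous' re-rooting invariance \cite[(20)]{aldous:1991} to identify the law of $((\cTe,\rho^*),\delta_\ex)$ with that of $((\cTe,0),\delta_\ex)$ as pointed metric spaces. The paper disposes of this in the two sentences preceding the theorem statement, so your slightly more careful bookkeeping about pointed Gromov--Hausdorff classes is, if anything, more detail than the authors give.
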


Note that the limit $\mathcal{T}_\mathbf{e}$ is quite different from the
Brownian map mentioned above; it is a (random) compact tree, and thus
contractible, that is, of the same homotopy type as a point, and its Hausdorff
dimension is 2 \cite{duquesne:2005,haas:2004}. Bettinelli~\cite
{bettinelli:2011} showed a similar convergence of uniform
quadrangulations with a boundary toward Aldous' continuum random tree
when the length of the boundary grows sufficiently fast and the
distances in the quadrangulations are divided by the square root of the
length of the boundary (see also the work of Bouttier and Guitter \cite
{bouttier:2009}). In this case, the boundary grows so fast that the
faces disappear when rescaled and the boundary folds into a tree. This
is analogous to our situation where the boundary of the large face
folds into a tree; see Figure~\ref{f:conv}. Other examples of planar maps
converging to Aldous' continuum tree are stack triangulations \cite
{albenque:2008} and random dissections of polygons \cite{curien:2013}.

\begin{figure}[b]

\includegraphics{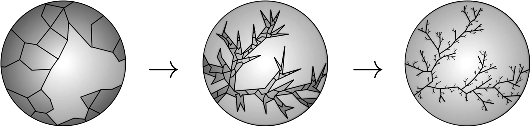}

\caption{The convergence in Theorem \protect\ref{th:main}. A face of large
degree (light gray) appears as the planar map gets larger and the
boundary of that face collapses into a tree.} \label{f:conv}
\end{figure}

The paper is organized as follows. We begin in Section~\ref{s:bdg} by
recalling the BDG bijection between planar maps and planar mobiles. In
Section~\ref{s:anotherb}, we introduce a bijection from the set of planar
trees to itself which allows us to translate results on the condensed phase
of simply generated trees to our setting. In Section~\ref{s:label}, we state
and prove Theorem~\ref{th:Dinv} which was described informally
above. Section~\ref{s:proof} is devoted to the proof of Theorem~\ref{th:main}. We end with some concluding remarks in Section~\ref{s:conc}
and \hyperref[AGW]{Appendix} containing further results on the random
Galton--Watson trees used here and their relation to the two-type
Galton--Watson trees used by Marckert and Miermont \cite{marckert:2007}.

\section{Planar mobiles and the BDG bijection} \label{s:bdg}

In this section, we define planar trees and mobiles and explain the BDG
bijection between mobiles and planar maps. We consider rooted and pointed
planar maps as is done in \cite{legall:2008} which is different from the
original case \cite{bouttier:2004} where the maps were pointed but not
rooted. (But see \cite{bouttier:2004}, Section~2.4.)

Planar trees are planar maps with a single face. It will be useful to
keep this definition in mind later in the paper but we recall a more
standard definition below and introduce some notation. The infinite
Ulam--Harris tree $T_\infty$ is the tree having a vertex set $\bigcup_{k=0}^\infty\mathbb{N}^k$, that is, the set of all finite sequences of
natural numbers, and every vertex $v = v_1 \cdots v_k$ is connected to
the corresponding vertex $v'=v_1\cdots v_{k-1}$ with an edge. In this
case, $v$ is said to be a child of $v'$ and $v'$ is said to be the
parent of $v$. The vertex belonging to $\mathbb{N}^0$ is called the
root and denoted by $r$.

A rooted planar tree $\tau$ is defined as a rooted subtree of
$T_\infty
$ having the properties that if $v = v_1\cdots v_k$ is a vertex in
$\tau
$ then $v_1\cdots v_{k-1} i$ is also a vertex in $\tau$ for every $i <
v_k$. The vertices in a planar tree have a lexicographical ordering
inherited from the lexicographical ordering of the vertices in
$T_\infty
$. This order relation will be denoted by $\leq$. Let $\Gamma_n$ be the
set of rooted planar trees with $n$ edges. We use the convention that
the root vertex is connected to an extra half-edge (not counted as an
edge) such that every vertex has degree 1${} + {}$the number of its children
(1${} + {}$its out degree). The number of edges in a planar tree $\tau$ will
be denoted by $|\tau|$.

Consider a tree $\tau_n \in\Gamma_n$ and color its vertices with two
colors, black and white, such that
the root and
vertices at even distance from the root
are white and vertices at odd distance from the root are black. Denote the
black vertex set of $\tau_n$ by $V^{\bullet}(\tau_n)$ and the white vertex
set by $V^{\circ}(\tau_n)$. If $u$ is a black vertex let $u_{0}$ be the
(white) parent of $u$ and denote by $u_{i}$ the $i$th (white) neighbor of
$u$ going clockwise around $u$ starting from $u_{0}$.

Assign integer labels $\ell_n\dvtx V^{\circ}(\tau) \rightarrow\mathbb
{Z} $ to
the white vertices of $\tau_n$ as follows: The root is labeled by 0.
If $u$
is black and has degree $k$ then
%
\begin{equation}
\label{labelrule} \ell_n(u_{j+1}) \geq\ell_n(u_{j})
- 1
\end{equation}
for all $0 \leq j \leq k$, with the convention that $u_{k} = u_{0}$.
The pair $\theta_n = (\tau_n,\ell_n)$ is called a mobile and we denote
the set of mobiles having $n$ edges by $\Theta_n$.

The set $\Theta_n \times\{-1,1\}$ is in a one to one correspondence with
the set $\mathcal{M}^\ast_{n}$ according to
(the rooted version of)
the BDG bijection
\cite{bouttier:2004,legall:2008}.
We will denote the BDG bijection by $\mathcal{F}_n\dvtx \mathcal{M}^\ast_{n} \rightarrow\Theta_n\times\{-1,1\}$ and we
give an
outline of its inverse direction below. Start with a planar mobile
$\theta_n\in\Theta_n$ and an $\varepsilon\in\{-1,1\}$. The contour
sequence of $\theta_n$ is a list $a_0,a_1,\ldots,a_{2n-1}$ of length $2n$
containing the vertices in the mobile (with repetitions allowed)
constructed as follows. The
first element is $a_0 = r$ and for each $i < 2n-1$ the element following
$a_i$ is the first child (in the lexicographical order) of $a_i$ which has
still not appeared in the sequence or if all its children have appeared it
is the parent of $a_i$. Extend this sequence to an infinite sequence by
$2n$ periodicity. The white contour sequence is defined as $c_i =
a_{2i}$, $i \geq0$. The white contour sequence can be described as
a list of the white vertices encountered in a clockwise walk around the
contour of the tree, which starts at the root. For an index $i \in
\mathbb{N,}$ define its successor as
%
\begin{equation}
\sigma(i) = \inf\bigl\{j > i \dvtx \ell_n(c_j) =
\ell_n(c_i)-1\bigr\},
\end{equation}
where the infimum of the empty set is defined as $\infty$.
Add an external vertex $\rho$ to the mobile, disconnected from all
other vertices, and write $\rho= c_\infty$. Also define the successor
of a white vertex $c_i$ as
%
\begin{equation}
\label{succ} \sigma(c_i) = c_{\sigma(i)}.
\end{equation}
A planar map is constructed from $\theta_n$ by inserting an edge
between $c_i$ and $\sigma(c_i)$ for each $0 \leq i < n$ and deleting
the edges and black vertices of the mobile. The vertex $\rho$
corresponds to the marked vertex of the planar map. The root edge in
the map is the edge between $c_0$ and $\sigma(c_0)$ and its direction
is determined by the value of $\varepsilon$, if $\varepsilon= 1$ ($\varepsilon
= -1$) the root edge points toward (away from) the root of the mobile.

%
\begin{figure}[b]

\includegraphics{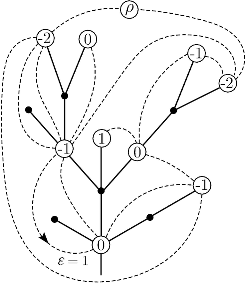}

\caption{An illustration of the BDG bijection. The edges in the mobile
are solid and the edges in the planar map are dashed.} \label{f:bdg}
\end{figure}

Thus, the white vertices in the mobile along with an additional isolated
white vertex $\rho$ correspond to the vertices in the planar map and the
black vertices in the mobile correspond to the faces in the planar map, a
face having a degree two times the degree of its corresponding black vertex;
see Figure~\ref{f:bdg} for an example. Moreover, the labels in a mobile give
information on distances to the marked vertex $\rho$ in the corresponding
planar map $\map$. Define the label of $\rho$ as $\ell_n(\rho) =
\min_{u\in
V^{\circ}(\map)} \ell_n(u)-1$. Then
%
\begin{equation}
\label{distances} d_n(v,\rho) = \ell_n(v) -
\ell_n(\rho),\qquad v\in V(\map),
\end{equation}
where by abuse of notation $\ell_n(v)$ stands for the label of the
white vertex in the mobile corresponding to the vertex $v$ in the
planar map.

The probability distribution $\mu_{n}$ on $\mathcal{M}^\ast_{n}$ is
carried to a probability distribution $\tilde{\mu}_n$ on $\Theta
_n\times\{-1,1\}$ through the BDG-bijection, that is, $\tilde{\mu
}_n(A) = \mu_{n}(\mathcal{F}_n^{-1}(A))$ for any subset $A \subseteq
\Theta_n\times\{-1,1\}$ and $\tilde{\mu}_n$ can be described as
follows: Let $\tau_n \in\Gamma_n$ and denote by $\lambda_n(\tau_n)$
the number of ways one can add labels to the white vertices of $\tau_n$
according to the above rules. One easily finds that
%
\begin{equation}
\lambda_n(\tau_n) = \prod_{v\in V^{\bullet}}
\pmatrix{2\deg (v)-1
\cr
\deg(v)-1}.
\end{equation}
This follows from counting the number of allowed label increments
around each black vertex $v$. The number of label increments around $v$
is $\deg(v)$, call them $x_1, x_2,\ldots,x_{\deg(v)}$ in say clockwise
order. The number of different configurations is then given by
%
\begin{equation}
\label{incr} \mathop{\sum_{x_1+\cdots+x_{\deg(v)} = 0 }}_{ x_i \geq-1, \forall
i}1
= \mathop{\sum_{y_1+\cdots+y_{\deg(v)}=\deg(v)}}_{ y_i \geq0,
\forall i}1 = \pmatrix{2
\deg(v) -1
\cr
\deg(v) -1},
\end{equation}
the number of compositions of $\deg(v)$ into $\deg(v)$ nonnegative parts.

A Boltzmann weight
%
\begin{equation}
\label{eq:bolmob} \tilde{W}(\tau_n) = \prod
_{v\in V^{\bullet}} \pmatrix{2\deg (v)-1
\cr
\deg (v)-1}q_{\deg(v)} =
\prod_{v\in V^{\bullet}} w_{\deg(v)}
\end{equation}
is assigned to the tree $\tau_n$ and
%
\begin{equation}
\tilde{\mu}_n\bigl(\bigl((\tau_n,\ell_n),
\varepsilon\bigr)\bigr) = \tilde{W}(\tau_n)/\bigl(\lambda_n(
\tau_n) Z_n \bigr),
\end{equation}
where $\ell_n$ is any labeling of $\tau_n$, $\varepsilon\in\{-1,1\}$ and
$Z_n = 2\sum_{\tau_n \in\Gamma_n} \tilde{W}(\tau_n)$ is the
finite volume
partition function defined in (\ref{eq:part1}). Note that given $\tau
_n$ the
labels $\ell_n$ are assigned uniformly at random from the set of all
labelings and $\varepsilon$ is chosen uniformly from $\{-1,1\}$. We will also
find it useful to study the distribution of $\tau_n$ after forgetting about
the labeling and the value of $\varepsilon$. For that purpose, we define
$\tilde{\nu}_n$ to be a probability distribution on $\Gamma_n$ given by
%
\begin{equation}
\label{tnu} \tilde{\nu}_n(\tau_n) =\sum
_{\ell_n,\eps} \tilde{\mu}_n\bigl(\bigl((
\tau_n,\ell_n),\varepsilon\bigr)\bigr) = 2\tilde{W}(
\tau_n)/Z_n.
\end{equation}

This distribution was shown by Marckert and Miermont \cite
{marckert:2007} to
be the distribution of a certain two-type Galton--Watson tree; see
\hyperref[AGW]{Appendix}.

\subsection{Distribution of labels in a fixed tree}
We provide a result which we will later need on the distribution of
the maximum absolute value of the labels in a mobile.

\begin{lemma} \label{l:labeldist}
Let $\theta_n = (\tau_n,\ell_n)\in\Theta_n$ be a mobile with $\tau
_n$ fixed
(nonrandom) and the labels $\ell_n$ chosen uniformly from the allowed
labelings of the white vertices of $\tau_n$ according to the rules
\eqref{labelrule}. For every $p>0$, there exists a constant $C(p)>0$
independent of $\tau_n$ such that
%
\begin{equation}
\label{labeldist} \mathbb{E} \Bigl(\sup_{v\in V^\circ(\tau_n)}\bigl|
\ell_n(v)\bigr|^p \Bigr) \leq C(p) n^{p/2}.
\end{equation}
\end{lemma}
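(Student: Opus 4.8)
The plan is to control the labels through the white contour process and to reduce the supremum to a two-point (increment) moment estimate, to which a M\'oricz-type maximal inequality applies; the hypothesis $p>2$ is exactly what makes that maximal inequality available. Reading the labels along the white contour sequence gives a process $S_i:=\ell_n(c_i)$, $0\le i\le n-1$, with $S_0=\ell_n(r)=0$. Since the contour visits every white vertex, $\max_{v\in V^{\circ}(\tau_n)}|\ell_n(v)|=\max_{0\le i\le n-1}|S_i|$, so it suffices to bound $\E\big[\max_i|S_i-S_0|^p\big]$. Each contour step moves to an adjacent white neighbour around some black vertex $u$, so the increment $S_{i+1}-S_i$ is one of the label increments around $u$ from \eqref{incr}. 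By \eqref{incr} the increments around a fixed black vertex of degree $k$ are uniform on $\{x_1,\dots,x_k\ge-1:\ \sum_i x_i=0\}$; equivalently (the standard device of realizing a uniform composition as conditioned i.i.d.\ geometrics) they are distributed as independent $\mathrm{Geometric}(1/2)$ variables shifted by $-1$ and conditioned to sum to $0$. I would record two consequences: (i) each increment is mean zero, has variance at most $2$, and has exponential tails uniformly in $k$; and (ii) increments around a common black vertex are exchangeable with fixed sum, hence pairwise negatively correlated, while increments around distinct black vertices are independent.

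Next I would establish the two-point estimate $\E|S_j-S_i|^p\le C(p)\,|j-i|^{p/2}$, uniformly over the fixed tree. Write $S_j-S_i$ as the sum of the $|j-i|$ intervening increments and group them by the black vertex they belong to. Although the contour interleaves the star of a black vertex with the exploration of the subtrees hanging from it, the increments of any one black vertex that fall in the range $(i,j]$ carry a run of consecutive edges of its star; hence $S_j-S_i=\sum_b Y_b$, where each $Y_b$ is a partial sum over consecutive edges of a single bridge, the $Y_b$ are mean zero and independent across black vertices $b$, and $\sum_b m_b=|j-i|$ with $m_b$ the number of edges contributing to $Y_b$. By (ii) one has $\operatorname{Var}(Y_b)\le 2m_b$, and a bridge moment bound for conditioned-geometric partial sums gives $\E|Y_b|^p\le C(p)\,m_b^{p/2}$ uniformly. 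Rosenthal's inequality for the independent blocks then yields
\begin{equation*}
\E|S_j-S_i|^p\le C(p)\Big[\Big(\sum_b 2m_b\Big)^{p/2}+\sum_b C(p)\,m_b^{p/2}\Big]\le C'(p)\,|j-i|^{p/2},
\end{equation*}
using $\sum_b m_b=|j-i|$ and $\sum_b m_b^{p/2}\le\big(\sum_b m_b\big)^{p/2}$, valid because $p\ge2$.

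Finally, since $p/2>1$, the M\'oricz maximal inequality (a moment maximal inequality upgrading two-point moments that grow like a superadditive set function raised to a power exceeding $1$) turns the two-point bound into
\begin{equation*}
\E\Big[\max_{0\le i\le n-1}|S_i-S_0|^p\Big]\le C(p)\,n^{p/2},
\end{equation*}
which, as $S_0=0$ and the constant depends only on $p$, is precisely \eqref{labeldist}.

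I expect the main obstacle to be the per-block bridge moment bound $\E|Y_b|^p\le C(p)\,m_b^{p/2}$ with a constant independent of the (arbitrary) degrees occurring in $\tau_n$: one must transfer the elementary Marcinkiewicz--Zygmund/Rosenthal bound for the unconditioned mean-zero geometric walk to its bridge, uniformly in the bridge length. I would handle this either by a local-limit/absolute-continuity comparison of the bridge with the free walk, or by exploiting the reflection symmetry of the bridge to reduce every run to one of index-length at most $k/2$, where the conditioning is harmless. The remaining ingredients --- the uniform exponential tails from the geometric representation, the sign of the intra-star covariances, and the superadditivity feeding the maximal inequality --- are routine.
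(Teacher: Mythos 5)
Your proposal is correct and follows essentially the same route as the paper: read the labels along the white contour as a conditioned random walk, split an increment $S_j-S_i$ into independent blocks indexed by black vertices, bound each block by the bridge moment estimate (which the paper simply imports as Lemma~1 of \cite{legall:2011}) and combine via Rosenthal's inequality to get $\mathbb{E}|S_j-S_i|^p\le C(p)|j-i|^{p/2}$. The only cosmetic difference is the last step, where you invoke M\'oricz's maximal inequality while the paper carries out the equivalent dyadic chaining with H\"older's inequality by hand; both use $p/2>1$ in exactly the same way.
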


To prove this lemma, we relate the labels of $\tau_n$ to a random walk
indexed by the white vertices in $\tau_n$. We start by proving the
result for $p>2$ and the general case follows by Jensen's inequality.
In the following, we will let $C_1,C_2,\ldots$ be constants which do
not depend on the tree $\tau_n$ but may depend on other quantities
which will then be explicitly indicated. As before, denote the white
contour sequence of a mobile $(\tau_n,\ell_n)$ by $(c_{i})_{0 \leq i
\leq n}$ where by definition $c_n = c_0$. Let $\xi_1$, $\xi_2,\ldots$
be a sequence of independent random variables identically distributed as
%
\begin{equation}
\P(\xi_1 = i) = 2^{-i-2},\qquad i=-1,0,1,\ldots.
\end{equation}
(This is a shifted geometric distribution with mean 0.)
The $\xi_i$ will have the role of jumps of the random walk. For each
black vertex $v \in\tau_n$, define the set $B_v \subseteq\mathbb
{N}$ by
%
\begin{equation}
B_v = \{i \in\mathbb{N} | c_{i-1} \sim v \mbox{ and }
c_i \sim v\},
\end{equation}
where $v \sim c_i$ means that $v$ and $c_i$ are nearest neighbors in
$\tau_n$. Define $S_m = \sum_{i=1}^m \xi_i$ and for any finite set $B
\subset\mathbb{N}$ let $S_B = \sum_{i\in B} \xi_i$. Define the conditioned
sequence of random variables
%
\begin{equation}
S_m^{\tau_n} = \bigl(S_m \mid S_{B_v} =
0 \mbox{ for all } v\in V^{\bullet}(\tau_n)\bigr),\qquad m=0,\ldots,n.
\end{equation}
A simple calculation similar to the one in (\ref{incr}) shows that
%
\begin{equation}
\label{eqSl} \bigl(S_m^{\tau_n} \bigr)_{m=0}^n
\stackrel{\mathrm{d}} {=} \bigl(\ell_n(c_m)
\bigr)_{m=0}^n.
\end{equation}
%
We have the following.

\begin{lemma}\label{Lineqshat}
Let $\tau_n$ be a fixed tree and let
$\hat{S}^{\tau_n}_n(t)$ be the continuous function on $[0,1]$ defined by
$\hat{S}^{\tau_n}_n(t) = n^{-1/2} S^{\tau_n}_{nt}$ when $t\in[0,1]$
and $nt$
is an integer, and extended by linear interpolation to all $t\in
[0,1]$. For
every $p\geq2$, there exists a constant $C_1(p)$ independent of
$n$ and $\tau_n$
such that
%
\begin{equation}
\label{ineqshat} \mathbb{E}\bigl|\hat{S}^{\tau_n}_n(t)-
\hat{S}^{\tau_n}_n(s)\bigr|^p \leq C_1(p)
|s-t|^{p/2}
\end{equation}
for any $0\leq s \leq t \leq1$.
\end{lemma}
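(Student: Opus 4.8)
The plan is to prove the estimate first for integer endpoints and then pass to the linearly interpolated process by a routine splitting argument (for $s,t$ in the same interval $[k/n,(k+1)/n]$ one uses $|t-s|^{p}n^{p/2}=(n|t-s|)^{p/2}|t-s|^{p/2}\le|t-s|^{p/2}$, and for general $s<t$ one inserts grid points); thus it suffices to show
\begin{equation}
\mathbb{E}\bigpar{|S_b^{\tau_n}-S_a^{\tau_n}|^p}\le C(p)\,(b-a)^{p/2},\qquad 0\le a\le b\le n,\ a,b\in\mathbb{Z},
\end{equation}
with $C(p)$ independent of $n$ and $\tau_n$. The starting point is the structure of the conditioning. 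Each index $i\in\{1,\dots,n\}$ corresponds to the contour step from $c_{i-1}$ to $c_i$, and these two white vertices have exactly one common black neighbour; hence the sets $B_v$, $v\in V^{\bullet}(\tau_n)$, form a partition of $\{1,\dots,n\}$ with $|B_v|=\deg(v)$ and $\sum_v\deg(v)=n$. Since the $\xi_i$ are independent and the conditioning event $\bigcap_v\{S_{B_v}=0\}$ factorizes over the disjoint blocks, the conditioned families $(\xi_i^{\tau_n})_{i\in B_v}$ are independent across $v$, and within each block they are $\deg(v)$ i.i.d.\ copies of $\xi_1$ conditioned to sum to $0$.

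Writing $m_v:=|B_v\cap(a,b]|$ and $Z_v:=\sum_{i\in B_v\cap(a,b]}\xi_i^{\tau_n}$, I would split $S_b^{\tau_n}-S_a^{\tau_n}=\sum_{v}Z_v$. Two facts make this tractable: the $Z_v$ are independent (they are measurable with respect to disjoint blocks), and any block with $B_v\subseteq(a,b]$ gives $Z_v=0$ by the conditioning, so only the straddling blocks contribute — though I will not need to identify them. Because within a block the conditioned increments are exchangeable, the deterministic subset $B_v\cap(a,b]$ enters only through its cardinality, so $Z_v\eqd\sum_{j=1}^{m_v}\eta_j^{(\deg(v))}$, where $\eta^{(d)}$ denotes $d$ i.i.d.\ copies of $\xi_1$ conditioned to sum to $0$; in particular $\mathbb{E} Z_v=0$. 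The key single-block estimates I would establish are
\begin{equation}
\mathbb{E}\bigpar{Z_v^2}\le C\,m_v,\qquad \mathbb{E}\bigpar{|Z_v|^p}\le C(p)\,m_v^{p/2},
\end{equation}
uniformly over all block sizes $\deg(v)$ and all $m_v\le\deg(v)$.

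Granting these, the conclusion follows from a Rosenthal (Marcinkiewicz--Zygmund) inequality for independent mean-zero summands,
\begin{equation}
\mathbb{E}\Bigpar{\bigabs{\textstyle\sum_v Z_v}^p}\le C(p)\Bigsqpar{\bigpar{\textstyle\sum_v\mathbb{E} Z_v^2}^{p/2}+\textstyle\sum_v\mathbb{E}|Z_v|^p},
\end{equation}
together with the identity $\sum_v m_v=b-a$ and the elementary inequality $\sum_v m_v^{p/2}\le(\sum_v m_v)^{p/2}$, valid for $p\ge2$. The first term is then at most $C(p)(b-a)^{p/2}$ and the second at most $C(p)(b-a)^{p/2}$, which gives the claim with a constant depending only on $p$.

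The main obstacle is the single-block bound, i.e.\ controlling partial sums of the bridge $\eta^{(d)}$ uniformly in $d$. For the variance this is explicit: exchangeability together with $\sum_{j=1}^{d}\eta_j=0$ forces $\mathrm{Cov}(\eta_1,\eta_2)=-\mathrm{Var}(\eta_1)/(d-1)$, whence $\mathrm{Var}\bigpar{\sum_{j=1}^{m}\eta_j}=\mathrm{Var}(\eta_1)\,m(d-m)/(d-1)\le\mathrm{Var}(\eta_1)\,m$, and it remains only to note that $\mathrm{Var}(\eta_1^{(d)})$ is bounded uniformly in $d$, since $\xi_1$ has exponential tails and a local limit theorem shows the conditioned marginal converges to the unconditioned one. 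For the $p$-th moment I would write $\mathbb{E}\bigpar{|\sum_{j=1}^{m}\eta_j|^p}=\mathbb{P}(S_d=0)^{-1}\,\mathbb{E}\bigpar{|S_m|^p\,\mathbbm{1}\{S_d=0\}}$; after reducing by symmetry (via $\sum_{j=1}^{m}\eta_j=-\sum_{j=m+1}^{d}\eta_j$) to the case $m\le d/2$, I bound $\mathbb{P}(S_d=0\mid S_m)=g_{d-m}(-S_m)\le C/\sqrt{d-m}\le C'/\sqrt d$ by the uniform local-limit upper bound on the pmf $g_k$ of $S_k$, use $\mathbb{P}(S_d=0)\ge c/\sqrt d$ in the denominator, and invoke the unconditioned bound $\mathbb{E}|S_m|^p\le C(p)m^{p/2}$ (available since $\xi_1$ has all moments). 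This yields $\mathbb{E}\bigpar{|\sum_{j=1}^{m}\eta_j|^p}\le C(p)\,m^{p/2}$, and assembling the estimates delivers the lemma with $C_1(p)$ independent of $n$ and $\tau_n$.
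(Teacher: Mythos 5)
Your proposal is correct and follows essentially the same route as the paper: reduce to integer endpoints, decompose the increment over the blocks $B_v$ into independent mean-zero contributions, bound each block's $p$-th moment by $C(p)m_v^{p/2}$, and conclude via Rosenthal's inequality together with $\sum_v m_v^{p/2}\le(\sum_v m_v)^{p/2}$. The one difference is that the paper simply cites Lemma 1 of Le Gall and Miermont for the single-block estimate, whereas you prove it from scratch (exchangeability to reduce to $m\le d/2$, the uniform local-limit upper bound $\sup_x\P(S_k=x)\le C k^{-1/2}$, the matching lower bound $\P(S_d=0)\ge c d^{-1/2}$, and the unconditioned moment bound); that sub-argument is sound and makes the proof self-contained, but it is not a structurally different approach.
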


\begin{pf}
First, consider the case when $s = k/n$ and $t = l/n$ for integers $k$
and~$l$. Suppose that $k < l$ and define $A = \{k+1,\ldots,l\}$ and
$A_v = A \cap B_v$, for every $v \in V^\bullet:= V^\bullet(\tau_n)$.
Then $A$ is the disjoint union of the $A_v$, $v \in V^\bullet$, and thus
%
\begin{equation}
S_l-S_k = S_A = \sum
_{v \in V^\bullet} S_{A_v}.
\end{equation}
Conditioning on $S_{B_v} = 0$ for all $v\in V^\bullet$ now yields
%
\begin{equation}
S^{\tau_n}_l-S^{\tau_n}_k = \sum
_{v \in V^\bullet} (S_{A_v} | S_{B_v} = 0).
\end{equation}
Define $Y_v = (S_{A_v} | S_{B_v} = 0)$ for every $v \in V^\bullet$,
and note that the random variables $Y_v$ are independent.
By \cite{legall:2011}, Lemma~1, there exists a constant $C_2(p)>0$
such that
for every $v$
%
\begin{equation}
\mathbb{E}|Y_v|^p \leq C_2(p)
|A_v|^{p/2}.
\end{equation}
Thus, by Rosenthal's inequality (see, e.g., \cite{gut:2005}, Theorem~3.9.1),
%
\begin{eqnarray}
&&\mathbb{E} \bigl|S^{\tau_n}_l-S^{\tau_n}_k\bigr|^p\nonumber\\
&&\qquad
= \mathbb{E} \biggl|\sum_{v
\in V^\bullet} Y_v
\biggr|^p \leq C_3(p) \sum_{v \in V^\bullet}
\mathbb {E}|Y_v |^p + C_4(p) \biggl(\sum
_{v \in V^\bullet} \mathbb{E}|Y_v |^2
\biggr)^{p/2}
\nonumber
\\[-8pt]
\\[-8pt]
\nonumber
&&\qquad\leq
\nonumber
C_5(p) \sum_{v \in V^\bullet}
|A_v|^{p/2} + C_6(p) \biggl(\sum
_{v \in V^\bullet} |A_v | \biggr)^{p/2}
\\
&&\qquad\leq C_7(p) \biggl(\sum_{v \in V^\bullet}
|A_v | \biggr)^{p/2} = C_7(p)
(l-k)^{p/2},\nonumber
\end{eqnarray}
which is equivalent to \eqref{ineqshat} in this case.
The case when $k/n \leq s \leq(k+1)/n$ follows directly since $\hat
{S}^{\tau_n}_n(t)$ is linear on $[k/n,(k+1)/n]$ and the general case
follows by splitting the interval $[s,t]$ into (at most) threes pieces
and using Minkowski's inequality.
\end{pf}

\begin{pf*}{Proof of Lemma~\ref{l:labeldist}}
We will prove an equivalent statement for $S^{\tau_n}_m$. For any
$t\in
[0,1)$ define the dyadic approximations $t_j = 2^{-j}\lfloor2^j
t\rfloor$,
$j=0,1,\ldots.$ Then $t_0=0$ and $t_j \rightarrow t$ as $j\rightarrow
\infty$. Since $\hat{S}_n^{\tau_n}$ is continuous, it holds that
$\hat{S}^{\tau_n}_n(t) = \sum_{j=0}^\infty
(\hat{S}^{\tau_n}_n(t_{j+1})-\hat{S}^{\tau_n}_n(t_j))$. Fix $p>2$. For
any $\varepsilon> 0$, by H\"{o}lder's inequality, letting $p'$ be the conjugate
exponent
%
\begin{eqnarray}
\bigl|\hat{S}^{\tau_n}_n(t)\bigr|^p &\leq& \Biggl(\sum
_{j=0}^\infty2^{-p'\varepsilon j}
\Biggr)^{p/p'} \sum_{j=0}^\infty2^{p\varepsilon j}
\bigl|\hat{S}^{\tau_n}_n(t_{j+1})-\hat{S}^{\tau_n}_n(t_j)\bigr|^p
\nonumber
\\[-8pt]
\\[-8pt]
\nonumber
&\leq& C_8(p,\varepsilon) \sum_{j=0}^\infty2^{p\varepsilon j}
\sum_{k=1}^{2^{j+1}}\bigl|\hat{S}^{\tau_n}_n
\bigl(k/2^{j+1}\bigr)-\hat{S}^{\tau
_n}_n
\bigl((k-1)/2^{j+1}\bigr)\bigr|^{p}.
\end{eqnarray}
The right-hand side is independent of $t$ so taking the supremum over
$t$ and then taking the expectation and using (\ref{ineqshat}) gives
%
\begin{eqnarray}
\mathbb{E} \sup_{t\in[0,1]} \bigl|\hat{S}^{\tau_n}_n(t)\bigr|^p
&\leq& C_8(p,\varepsilon) \sum_{j=0}^\infty2^{p\varepsilon j}
2^{j+1} C_{1}(p) 2^{-jp/2}
\nonumber
\\[-8pt]
\\[-8pt]
\nonumber
& =& C_{9}(p,
\varepsilon) \sum_{j=0}^\infty2^{(p\varepsilon+1 -p/2)j}.
\end{eqnarray}
By choosing $\varepsilon< (p/2-1)/p$, the estimate (\ref{labeldist}) follows
due to (\ref{eqSl}).
\end{pf*}

\begin{remark}
By \cite{billingsleyI}, Theorem~12.3 and (12.51),
Lemma~\ref{Lineqshat} implies also that the family of all random functions
$\hat{S}^{\tau_n}_n(t)$, where $n\in\mathbb{N}$
and $\tau_n$ ranges over all rooted
planar trees with $n$ edges, is tight in $C([0,1])$;
equivalently, we may consider $n^{-1/2}\ell_n(c_{nt})$, extended to
$t\in[0,1]$ by linear interpolation.
However, this family does not have a
unique limit in distribution as $n\to\infty$. For example, if $\tau_n$
is a
star, then
$\hat{S}^{\tau_n}_n(t)$ converges to $\sqrt2\mathbf{b}(t)$, where
$\mathbf
b$ is a Brownian bridge, while if $\tau_n$ is a path, with the root at one
endpoint, $\hat{S}^{\tau_n}_n(t)$ converges to
$(2/3)^{1/2}\mathbf{B}(t\wedge(1-t))$ where $\mathbf B$ is a standard
Brownian motion. And in many cases,
$\hat{S}^{\tau_n}_n(t)$ converges to 0;
if, for example, $\tau_n$ is a random binary tree, then
$n^{-1/4} S^{\tau_n}_{nt}$ converges in distribution. See, for
example, \cite{janson-marckert:2005}, and thus
$\hat{S}^{\tau_n}_n(t)$ is typically of the order $n^{-1/4}$.
\end{remark}

\section{Another useful bijection and simply generated trees} \label
{s:anotherb}
The coloring of the vertices in the mobiles is simply a bookkeeping
device which groups together vertices in every second generation. We
will continue referring to black and white vertices in trees even when
no labels are assigned to white vertices. There exists a useful
bijection from the set of trees $\Gamma_n$ to itself which maps white
vertices to vertices of degree 1 and black vertices of degree $k\geq1$
to vertices of degree $k+1$. We will denote the bijection by $\mathcal
{G}_n$. The bijection can be described informally in the following way:
Start with a tree with vertices colored black and white as described
above, the root being white. It will be mapped to a new tree which has
the same vertex set as the old one but different edges. First consider
the root, $r$, say of degree $i$ and denote its black children by
$r_1,\ldots,r_{i-1}$. Begin by attaching a half-edge to $r_{1}$ which
becomes the root of the new tree. Then connect $r_{j}$ to $r_{j+1}$
with an edge for $1 \leq j\leq i-1$ and finally
connect $r_{i-1}$ to
the root $r$. Continue in the same way recursively for each of the
subtrees attached to each of the $r_{j}$. More precisely, for a given
white vertex $u\neq r$ of degree $k$ denote its parent by $u_{0}$ and
its children by $u_{1},\ldots,u_{k-1}$. Insert an edge between $u_j$
and $u_{j+1}$ for $0 \leq j < k-1$ if possible (i.e., if $k>0$), and
finally connect $u_{k-1}$ to $u$; see Figure~\ref{f:treetotree}.
%
\begin{figure}

\includegraphics{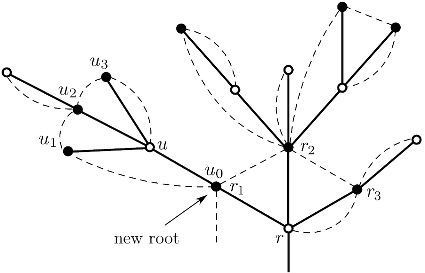}

\caption{A diagram describing the bijection from $\Gamma_n$ to itself
which sends white vertices to vertices of degree 1 and black vertices
of degree $k$ to vertices of degree $k+1$.} \label{f:treetotree}
\end{figure}

To see that $\mathcal{G}_n$ is a bijection, we describe here its
inverse. Start with a tree with all vertices black except the leaves
which are white.
Let $(a_i)_{i \geq0}$ be the contour sequence of the tree. If $a_j$ is
a leaf let $\eta(j)$ denote the maximum number such that $a_j,
a_{j+1},\ldots,a_{j+\eta(j)}$ all lie on the path from $a_j$ to the
root. Now, for each white $a_j$ insert an edge between $a_j$ and
$a_{j+k}$ for $1 \leq k \leq\eta(j)$ and remove the edges of the
original tree. Let the last white vertex (within one period $[0,2n)\cap
\mathbb{Z}$) in the contour sequence be the root of the resulting tree.
In the process, the degree of each black vertex is reduced by one and
the degree of a white vertex $a_j$ becomes $\eta(j)$ with the exception
of the root in which case the degree becomes $\eta+1$.

The usefulness of the bijection $\mathcal{G}_n$ is that it gives a
simple description of the probability distribution $\tilde{\nu}_n$. Let
$\nu_n$ be the push-forward of $\tilde{\nu}_n$ by $\mathcal{G}_n$. By
(\ref{eq:bolmob}) and the properties of $\mathcal{G}_n$,
%
\begin{equation}
\label{nu} \nu_n(\tau_n) = 2 Z_n^{-1}
\prod_{v\in V(\tau_n)} w_{\deg(v)-1},
\end{equation}
where we recall that $w_i$ was defined in (\ref{eq:defp}). The
convenient thing is that now all vertices are treated equally. The
probability measure $\nu_n$ describes simply generated trees,
originally introduced by Meir and Moon \cite{meir:1978} and has since
been studied extensively; see, for example, \cite{janson:2012} and
references therein.

For the weights (\ref{A1}) in case \eqref{cond:A1} in the
\hyperref[s:intro]{Introduction}, we
define the
probabilities
%
\begin{equation}
\label{probo} p_i = \frac{w_i}{g(1)};
\end{equation}
thus, for $i\ge1$, with $\LL(i)= g(1)^{-1}L(i)$,
%
\begin{equation}
\label{probox} p_i=\LL(i) i^{-\gb}.
\end{equation}
We let $\mathbb{P}_{p}$ be the law of a Galton--Watson tree with offspring
distribution $(p_i)_{i\geq0}$; see, for example, \cite{athreya,janson:2012}.
Note that the expected number of offspring of an individual in the
Galton--Watson process is equal to $g'(1)/g(1) = \mean$. We will furthermore
denote the variance of the number of offspring by
%
\begin{equation}
\sigma^2 = g''(1)/g(1)+\mean(1-\mean),
\end{equation}
which may be finite or infinite depending on the value of $\beta$. The
measure $\nu_n$ viewed as a measure on the set of finite trees is in
this case equal to the measure $\mathbb{P}_{p}(\cdot| |\tau|=n)$,
where $\tau$ denotes a finite tree. In case (\ref{cond:A2}), $\nu_n$
has no such equivalent description in terms of a Galton--Watson process.

Using the bijection $\mathcal{G}_n$, one can translate known results on
simply generated trees to the trees distributed by $\tilde{\nu}_n$. We
will now introduce some notation and state a few technical results
needed later on, some of which are interesting by themselves. In a
random tree $\tau_n$ distributed by $\tilde{\nu}_n$ select a black
vertex of maximum degree in some prescribed way (e.g., as the first
such vertex encountered in the lexicographical order) and denote it by
$s$. Denote the degree of $s$ by $\Delta_n$ and the white vertices
surrounding $s$ by $s_{0},s_{1},\ldots,s_{\Delta_n-1}$ in a clockwise
order, taking $s_0$ as the parent of $s$. For more compact notation, we
do not explicitly write the dependency of $s$ and $s_i$ on $n$.

Denote by $\tau_{n,0}$ the tree which consists of all vertices in
$\tau
_n$ apart from $s$ and its descendants. Let $\tau_{n,i}$ be the tree
consisting of $s_i$ and its descendants, $1 \leq i \leq\Delta_n-1$.
Furthermore, define $N_{n,i}^\circ$ as the number of white vertices in
$\tau_{n,i}$. Write $\tau_n' = \mathcal{G}_n(\tau_n)$ and let $s'$ be
the vertex in $\tau_n'$ corresponding to the vertex $s$ in $\tau_n$.
Then $\deg(s') = \Delta_n+1$. Define the subtrees $\tau_{n,i}'$ around
$s'$ in $\tau_n'$ in an analogous way as above where $0 \leq i \leq
\Delta_n$. It is then simple to check that
%
\begin{equation}
\label{masssum} |\tau_{n,0}| = \bigl|\tau_{n,0}'\bigr| +
\bigl|\tau_{n,\Delta_n}'\bigr|+1 \quad\mbox{and} \quad|\tau_{n,i}| =\bigl |
\tau_{n,i}'\bigr|
\end{equation}
for $1 \leq i \leq\Delta_n -1$. This is the key relation used to
translate results from the simply generated trees to the mobiles.

Let $Y=(Y_t)_{t\geq0}$ be the spectrally positive stable process with Laplace
transform $\mathbb{E}(\exp(-\lambda Y_t))
=\exp (t\lambda^{2\wedge(\beta-1)} )$.
(This is a L\'{e}vy process with no negative jumps; the L\'{e}vy
measure is
$\gG(-\ga)^{-1} x^{-\ga-1}\,d x$ on $x>0$, where $\ga=2\wedge(\gb
-1)\in(1,2]$.
See, for example, \cite{Bertoin} and \cite{Zolotarev}.)
Denote by $\mathbb{D}([0,1])$ the set of
c\`{a}dl\`{a}g functions $[0,1]\rightarrow\mathbb{R}$ with the Skorohod
topology; see \cite{billingsleyI}, Section~14. We have the following
proposition for the case \eqref{A1}, where $0<\mean<1$.

\begin{proposition} \label{p:scgw}
For the weights \eqref{A1}, the tree distributed by $\tilde{\nu}_n$
has the
properties that
\begin{longlist}[(1)]
\item[(1)]
%
\begin{equation}
\frac{\Delta_n}{n} \mathop{\xrightarrow}_{n\rightarrow\infty
}^{\inp} 1-\mean.
\end{equation}
\item[(2)]
%
\begin{equation}
\frac{N_n^\circ}{n} \mathop{\xrightarrow}_{n\rightarrow\infty
}^{\inp}
p_0
\end{equation}
with $p_0=1/g(1)$ defined in \eqref{probo}.

\item[(3)]
For any fixed $i\geq0$, $|\tau_{n,i}|$
converges in
distribution as $n\rightarrow\infty$ to a finite random variable.
For $i\ge1$, the limit equals $|\tau|$, where $\tau$ is a Galton--Watson
tree with offspring distribution $(p_i)_{i\ge0}$.

\item[(4)]
There exists a slowly varying function $L_1(n)$ such that for
$C_n = L_1(n)\times  n^{{1}/{(2\wedge(\beta-1))}}$ the following weak convergence
holds in $\mathbb{D}([0,1])$:
%
\begin{equation}
\label{endist} \biggl(\frac{\sum_{i=1}^{\lfloor(\Delta_n-1) t\rfloor
}N_{n,i}^\circ
-({p_0}/{1-\mean})\Delta_n t }{C_{n }} \biggr)_{0\leq t \leq1} \mathop{
\xrightarrow}_{n\rightarrow\infty}^{\ind} (Y_t )_{0\leq t
\leq1}.
\end{equation}
\item[(5)]
It holds that
%
\begin{equation}
\frac{1}{C_n} \sup_{1\leq i \leq\Delta_n-1} N_{n,i}^\circ
\mathop{\xrightarrow}_{n\rightarrow\infty}^{\ind} V
\end{equation}
with $C_n$ from part (4) and the random variable $V=\max_{0\le
t\le1}\gD Y_t$.
\end{longlist}
\end{proposition}

\begin{pf}
Part (1) follows from the corresponding result for simply
generated trees which was originally proven in \cite{jonsson:2011} in the
case of an asymptotically constant slowly varying function $L$ in
(\ref{A1}) and then
in \cite{kortchemski:2012} for a general slowly varying function $L$.

Part (2) follows from \cite{janson:2012}, Theorem~7.11(ii), since
the number of white vertices $N_n^\circ$ in the tree $\tau_n$
equals the number of leaves in the simply generated tree $\tau'_n$,
via the
bijection
$\mathcal{G}_n$.

For part (3), we note that the simply generated trees
distributed by $\nu_n$ converge locally toward an infinite random tree;
see \cite{jonsson:2011}, Theorem~5.3, in the case of an asymptotically
constant slowly varying function $L$ and \cite{janson:2012}, Theorem~7.1,
for the most general case. Local convergence of the trees distributed by
$\tilde{\nu}_n$ follows and the result in part (3)
is then an immediate consequence; see the arguments in the proof of Theorem~3(iii) in
\cite{kortchemski:2012}.

Part (4) requires some explanation. We will prove a
corresponding statement for the simply generated trees distributed by
$\nu_n$. Recall the notation $\tau_n$ for (colored) trees distributed by
$\tilde{\nu}_n$ and
$\tau'_n$ for (conditioned Galton--Watson) trees distributed by $\nu
_n$ as
explained in the paragraph above (\ref{masssum}).
First of all, note
that the number of white vertices in $\tau_{n,i}$, which is denoted by
$N_{n,i}^\circ$, corresponds to the number of leaves in the trees
$\tau_{n,i}'$ for $1\leq i \leq\Delta_n -1$.

Recall that $\mathbb{P}_{p}$ is the law of a Galton--Watson process with
the offspring distribution $(p_i)_{i\geq0}$ defined in
(\ref{probo}). Denote by $N$ the total progeny (number of vertices) of the
Galton--Watson process distributed by $\mathbb{P}_{p}$ and denote the
random number of leaves by $N^{(0)}$.
It is well known that $\mathbb{E}(N) = 1/(1-\mean)$ (see,
e.g., \cite{athreya}), and
furthermore,
%
\begin{equation}
\label{en0} \E N^{(0)}=\frac{p_0}{1-\mean}=p_0\E N;
\end{equation}
in fact, the expected number of vertices in generation $m\ge0$ is
$\mean^m$,
and the expected number of leaves among them is $p_0\mean^m$, where summing
over all $m\ge0$ yields \eqref{en0}.
This explains the linear term in \eqref{endist}.

Kortchemski \cite{kortchemski:2012}, Theorem~4, proved a
convergence result in $\mathbb{D}([0,1])$
which in our notation can be written as
%
\begin{equation}
\label{kortweak} \biggl(\frac{\sum_{i=1}^{\lfloor(\Delta_n-1) t\rfloor}(|\tau
_{n,i}'|+1)-({1}/{(1-\mean)})\Delta_n t }{B_{n}'} \biggr)_{0\leq t
\leq
1} \mathop{
\xrightarrow}_{n\rightarrow\infty}^{\ind} ({Y}_t
)_{0\leq t
\leq1},
\end{equation}
where $B_n' = L_2(n)n^{{1}/{(2\wedge(\beta-1))}}$ for some slowly varying
function $L_2$. 
The main idea
of Kortchemski's proof is to use the fact that for $n$ large, the subtrees
$\tau_{n,i}'$ become asymptotically
independent copies of a Galton--Watson process with law
$\mathbb{P}_{p}$, and thus $|\tau_{n,i}'|+1$ appearing in the sum in
(\ref{kortweak}) can be replaced by a sequence $(N_i)_{i\geq1}$
of independent random variables distributed as $N$.
(This is shown in \cite{kortchemski:2012} as a consequence of a
corresponding result for random walks by
Armend\'{a}riz and Loulakis \cite{ArmendarisLoulakis}.)
Furthermore, it is well known
(see, e.g., \cite{Otter,Kemperman1,Kemperman2}, \cite{Pitman}, Section~6.1, \cite{janson:2012}, Theorem~15.5) that
if $\xxi_i$, $i=1,2,\ldots,$ is a sequence of independent random
variables with the distribution $(p_i)_{i\ge0}$,
and we let $\xS_n=\sum_{i=1}^n\xxi_i$, then
%
\begin{equation}
\label{jk} \P(N=n)=\frac{1}n \P(\xS_n=n-1).
\end{equation}
Moreover, from the tail behavior \eqref{probox} of $p_i=\P(\xi=i)$,
it follows that,
recalling that $\E\xxi_i=\mean$,
%
\begin{eqnarray}
\label{sw} \P(\xS_n=n-1) &=& \P \bigl(\xS_n-n
\mean=n(1-\mean)-1 \bigr)
\nonumber
\\[-8pt]
\\[-8pt]
\nonumber
& &=n\bigl(1+o(1)\bigr)\P\bigl(\xxi_1=\bigl\lfloor n(1-\mean)-1\bigr
\rfloor\bigr)
\end{eqnarray}
as ${n\to\infty}$, see \cite{DenisovDS} for more general statements.
(In our case, \eqref{sw} follows also directly by a modification of the
proof of \cite{janson:2012}, Theorem~19.34.)
Combining \eqref{jk}, \eqref{sw} and \eqref{probox}, we obtain
%
\begin{equation}
\label{pi}\qquad \P(N=n) 
= \bigl(1+o(1)\bigr) (1-\mean)^{-\gb}
\LL(n) n^{-\gb} = \bigl(1+o(1)\bigr) (1-\mean)^{-\gb}
p_n,
\end{equation}
so the distribution of $N$ also obeys \eqref{A1} (with a different $L$),
which by standard results (see, e.g., \cite{FellerII}, Section XVII.5)
implies that $N$ is in the domain of attraction of
a spectrally positive stable distribution of index $\ga=2\wedge(\gb
-1)$, and
thus
%
\begin{equation}
\label{jcd} \biggl(\frac{\sum_{i=1}^{\lfloor n t \rfloor}N_i-({1}/{(1-\mean)})nt
}{B_n'} \biggr)_{0\leq t \leq1} \mathop{
\xrightarrow}_{n\rightarrow
\infty
}^{\ind} ({Y}_t )_{0\leq t \leq1}
\end{equation}
for a suitable
$B_n'=L_2(n)n^{{1}/{(2\wedge(\beta-1))}}$.
We refer to \cite{kortchemski:2012} for further details, and for the
arguments using \eqref{jcd} to show \eqref{kortweak}.

Going through Kortchemski's proof, one sees that the latter arguments
apply in
our case also
if we replace $\mathscr{Z}^{(k)}$ in \cite{kortchemski:2012} by
$ (C_k^{-1} (\sum_{i=1}^{\lfloor kt\rfloor} N_i^{(0)} - \frac
{p_0}{1-\mean} kt ) )_{0\leq t\leq\eta}$
and the problem is reduced to showing that if
$(N_i,N_i^{(0)})_{i\geq1 }$ is a sequence of
independent random vectors distributed as $(N,N^{(0)})$, then
%
\begin{equation}
\label{N0conv} \biggl(\frac{\sum_{i=1}^{\lfloor n t \rfloor}N_{i}^{(0)}-
({p_0}/{(1-\mean)})n t} {
C_n} \biggr)_{0\leq t \leq1} \mathop{
\xrightarrow}_{n\rightarrow\infty}^{\ind} (\hat{Y}_t
)_{0\leq t \leq1},
\end{equation}
where $\hat{Y}$ has the same distribution as $Y$, and that this
holds jointly with (\ref{jcd}).
(Joint convergence is used in the analogue of
\cite{kortchemski:2012}, (31), in the proof; however, the joint distribution
of $(Y,\hat Y)$ does not influence the result \eqref{endist}.)
The proof of part~(4) is thus completed by Lemma~\ref
{Ljoint} below.

Finally, part (5) follows from part (4);
see the proof of Corollary~2 in \cite{kortchemski:2012}.
\end{pf}

\begin{remark}
Actually, it would suffice to prove \eqref{N0conv} separately; this and~\eqref{jcd} show in particular that the left-hand sides are tight in
$\mathbb{D}([0,1])$, which implies that they are jointly tight in
$\mathbb{D}([0,1])\times\mathbb{D}([0,1])$,
and we can obtain the desired joint convergence by considering suitable
subsequences; this is enough to show \eqref{endist} for the full sequence
since the result does not depend on the joint distribution of
$ ({Y}_t )_{0\leq t \leq1}$ and $ (\hat{Y}_t
)_{0\leq t \leq1}$.
We can show \eqref{N0conv} by the same
standard results as for \eqref{jcd} together with the estimate
%
\begin{equation}
\label{n0tail} \P\bigl(N^{(0)}=n\bigr)\sim c \LL(n) n^{-\gb}
\end{equation}
for some $c>0$,
see Lemma~\ref{LN0},
which shows that
the distribution of $N^{(0)}$ has the same tail behavior as $N$ and
$(p_i)_{i\ge0}$.
This thus yields an alternative proof of
Proposition~\ref{p:scgw}(4).
\end{remark}

Before stating and proving Lemma~\ref{Ljoint} used above, we give
another lemma.

\begin{lemma}\label{L1}
For the weights \eqref{A1},
with notation as above, as ${n\to\infty}$,
%
\begin{equation}
\label{l1b} \P \bigl( \bigl|N^{(0)}-p_0 N \bigr|\ge n \bigr) = o
\bigl(\LL (n)n^{1-\gb} \bigr) =o(n p_n) =o \bigl(\P(N\ge n)
\bigr).
\end{equation}
\end{lemma}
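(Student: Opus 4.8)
The plan is to realise the difference $N\oo-p_0N$ as a stopped martingale with \emph{bounded} increments, and thereby separate the heavy fluctuations of $N$ from the light fluctuations of $N\oo$ around its conditional mean $p_0N$. Using the depth-first (\L{}ukasiewicz) encoding of the Galton--Watson tree, let $\xxi_1,\xxi_2,\dots$ be i.i.d.\ with distribution $\ppi$, put $\xS_m=\sum_{k=1}^m\xxi_k$, and let $N=\inf\{m\ge1:\xS_m=m-1\}$ be the first-passage time; this is exactly the representation underlying \eqref{jk}. Here $\xxi_k$ is the number of children of the $k$-th explored vertex, so the leaves are precisely the vertices with $\xxi_k=0$, and hence
\begin{equation}
 N\oo=\sum_{k=1}^N I_k,\qquad I_k:=\mathbbm{1}\{\xxi_k=0\},\quad \E I_k=p_0.
\end{equation}
Consequently $N\oo-p_0N=M_N$, where $M_m:=\sum_{k=1}^m(I_k-p_0)$. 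Since $I_k$ is independent of $\mathcal F_{k-1}=\sigma(\xxi_1,\dots,\xxi_{k-1})$ and $\E(I_k-p_0)=0$, the process $(M_m)$ is a martingale with increments bounded by $1$, and $N$ is a stopping time for the same filtration.

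The conceptual heart of the matter is that the single large offspring value $\xxi_k$ responsible for making $N$ large contributes only $I_k-p_0=-p_0$, i.e.\ $O(1)$, to $M$; thus, unlike $N$, the martingale $M$ carries no heavy tail once the length $N$ of the walk is controlled. Concretely, for a deterministic cutoff $T_n$ I would split
\begin{equation}
 \P\bigpar{|M_N|\ge n}\le \P(N>T_n)+\P\Bigpar{\max_{m\le T_n}|M_m|\ge n},
\end{equation}
the second term dominating $\{|M_N|\ge n,\ N\le T_n\}$ because on that event $\max_{m\le T_n}|M_m|\ge|M_N|$. The Azuma--Hoeffding maximal inequality (via Doob applied to $e^{\theta M_m}$) bounds the second term by $2\exp(-n^2/(2T_n))$. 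For the first term, \eqref{pi} together with Karamata's theorem gives $\P(N\ge t)\sim \tfrac{(1-\mean)^{-\gb}}{\gb-1}\,\LL(t)t^{1-\gb}$, whence $\P(N>T_n)\le C\,\LL(T_n)T_n^{1-\gb}$.

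It then remains to tune $T_n$. Taking $T_n=c\,n^2/\log n$ with $c<1/(2(\gb-1))$ makes $2\exp(-n^2/(2T_n))=2\,n^{-1/(2c)}$ decay faster than $n^{1-\gb}$, while $T_n^{1-\gb}$ is of order $n^{2(1-\gb)}(\log n)^{\gb-1}=o(n^{1-\gb})$ since $\gb>1$; the slowly varying factors are harmless. Both error terms are therefore $o(\LL(n)n^{1-\gb})$, and since $np_n=\LL(n)n^{1-\gb}$ and $\P(N\ge n)\asymp\LL(n)n^{1-\gb}$, all three forms of \eqref{l1b} follow. The main obstacle is thus not computational but structural: one must recognise that $N\oo-p_0N$ is driven by bounded martingale increments \emph{decoupled} from the heavy-tailed jump that inflates $N$, so that a Gaussian-type deviation estimate beats the polynomial tail of $N$; the balancing choice of $T_n$ is then routine. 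A pleasant feature is that the argument is uniform in $\gb>2$, because $I_k$ is bounded irrespective of whether $\xxi_k$ has finite variance.
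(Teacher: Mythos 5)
Your proof is correct, and it takes a genuinely different route from the paper's. The paper argues softly: starting from the crude pathwise bound $|N\oo-p_0N|\le N$ it splits
$\P\bigpar{|N\oo-p_0N|\ge n}\le \P(N\ge an)+\P\bigpar{|N\oo-p_0N|\ge N/a \text{ and } N\ge n}$,
controls the second term by $\eps\P(N\ge n)$ using the conditional law of large numbers $(N\oo\mid N=n)/n\pto p_0$ quoted from \cite[Theorem 7.11]{janson:2012}, and then sends $a\to\infty$ and $\eps\to0$ against the regularly varying tail \eqref{pix}. You instead realise $N\oo-p_0N$ as the terminal value $M_N$ of a martingale with increments bounded by $1$ along the {\L}ukasiewicz exploration, so that the single heavy-tailed jump which inflates $N$ contributes only $O(1)$ to $M$, and you beat the polynomial tail of $N$ with Azuma--Hoeffding after truncating at $T_n\asymp n^2/\log n$ (note that the inclusion $\{|M_N|\ge n,\ N\le T_n\}\subseteq\{\max_{m\le T_n}|M_m|\ge n\}$ is purely pathwise, so the stopping-time property of $N$ is not even needed). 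This buys two things: the argument is self-contained given the tail asymptotics \eqref{pix}, with no appeal to the external concentration result of \cite{janson:2012}; and it yields a quantitative bound $O\bigpar{\LL(T_n)T_n^{1-\gb}+n^{-1/(2c)}}$, a genuine power saving over the required $o(\LL(n)n^{1-\gb})$ and within logarithms of the true tail, which is of order $\P(N\gtrsim n^2)$. Two steps you wave at should be spelled out: the comparison $\LL(T_n)T_n^{1-\gb}=o(\LL(n)n^{1-\gb})$ needs Potter's bounds rather than just the uniform convergence theorem, because $T_n/n\to\infty$; and $n^{-1/(2c)}=o(\LL(n)n^{1-\gb})$ uses that $\LL(n)n^{\gd}\to\infty$ for every $\gd>0$, which is exactly why the strict inequality $c<1/(2(\gb-1))$ matters. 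Both are routine.
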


\begin{pf}
Note first that \eqref{pi} and \eqref{probox} imply,
by a standard calculation \cite{BinghamGoldieTeugels:1987},
%
\begin{eqnarray}
\label{pix} \P(N\ge n) &=& \bigl(1+o(1)\bigr) (1-\mean)^{-\gb} (
\gb-1)^{-1}\LL(n) n^{1-\gb}
\nonumber
\\[-8pt]
\\[-8pt]
\nonumber
&=& \bigl(1+o(1)\bigr) (\gb-1)^{-1}(1-\mean)^{-\gb} n
p_n.
\end{eqnarray}

Let $a>0$. Since $|N^{(0)}-p_0N|\le N$,
%
\begin{eqnarray}
\label{ems} &&\P \bigl(\bigl|N^{(0)}-p_0N\bigr|\ge n \bigr)
\nonumber
\\[-8pt]
\\[-8pt]
\nonumber
&&\qquad \le \P(N
\ge an)
+ \P\biggl(\bigl|N^{(0)}-p_0N\bigr|\ge\frac{1}a N
\mbox{ and } N\ge n\biggr).
\end{eqnarray}
Let $\eps>0$.
By \cite{janson:2012}, Theorem~7.11,
$(N^{(0)}\mid N=n)/n \pto p_0$ as ${n\to\infty}$. Thus,
$\P (|N^{(0)}-p_0N|\ge a^{-1}N \mid N=n )<\eps$
if $n$ is large enough, and for such $n$,
%
\begin{eqnarray}
\label{emsx}&& \P\biggl(\bigl|N^{(0)}-p_0N\bigr|\ge\frac{1}a
N \mbox{ and } N\ge n\biggr)
\nonumber\\
&&\qquad =\sum_{m=n}^\infty \P\biggl(\bigl|N^{(0)}-p_0N\bigr|
\ge\frac{1}a N\Bigm| N=m\biggr)\P(N=m)
\\
&&\qquad \le\eps\P(N\ge n).\nonumber
\end{eqnarray}
Thus, \eqref{ems} yields, for large $n$,
%
\begin{equation}
\P \bigl(\bigl|N^{(0)}-p_0N\bigr|\ge n \bigr) \le\P(N\ge
an) + \eps\P (N\ge n),
\end{equation}
which
by \eqref{pix} yields, with $C=(\gb-1)^{-1}(1-\mean)^{-\gb}$,
%
\begin{equation}
\P \bigl(\bigl|N^{(0)}-p_0N\bigr|\ge n \bigr) \le\bigl(1+o(1)\bigr)C
\bigl(a^{1-\gb}+\eps \bigr)\LL(n)n^{1-\gb}.
\end{equation}
Since we may choose $a$ arbitrarily large and $\eps$ arbitrarily small,
\eqref{l1b} follows.
\end{pf}

\begin{lemma}
\label{Ljoint}
The limits
\eqref{jcd} and \eqref{N0conv},
in distribution in $\mathbb{D}([0,1])$, hold jointly.
\end{lemma}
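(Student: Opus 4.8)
The plan is to reduce the lemma to the decomposition $N_i\oo=p_0N_i+D_i$, where $D_i:=N_i\oo-p_0N_i$. By \eqref{en0} we have $\E N\oo=p_0\E N$, so $\E D_i=0$, while \refL{L1} gives $\P(|D_i|\ge n)=o\bigpar{\P(N\ge n)}$, i.e.\ the tail of $D_i$ is negligible against that of $N_i$. Since the centring constant $\tfrac{p_0}{1-\mean}$ in \eqref{N0conv} is $p_0$ times the one in \eqref{jcd}, subtracting gives the exact identity
\begin{equation*}
\frac1{C_n}\Bigpar{\sum_{i\le\lfloor nt\rfloor}N_i\oo-\tfrac{p_0}{1-\mean}nt}
=\frac{p_0B_n'}{C_n}\cdot\frac1{B_n'}\Bigpar{\sum_{i\le\lfloor nt\rfloor}N_i-\tfrac1{1-\mean}nt}
+R_n(t),
\end{equation*}
where $R_n(t):=C_n^{-1}\sum_{i\le\lfloor nt\rfloor}D_i$. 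Thus the process in \eqref{N0conv} is a deterministic multiple of the process in \eqref{jcd} plus the remainder $R_n$, and I would finish by splitting into the cases $\operatorname{Var}(N)=\infty$ and $\operatorname{Var}(N)<\infty$.

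When $\operatorname{Var}(N)=\infty$ (covering $2<\beta<3$, and $\beta=3$ with non-summable $\LL$) the heavy tail dominates, and the goal is to show $\sup_t|R_n(t)|\pto0$. Fixing $\eta>0$ and truncating $D_i$ at level $\eta C_n$: since $B_n'$ and $C_n$ are regularly varying of index $\tfrac1{2\wedge(\beta-1)}$ with proportional tails (by \eqref{pi} and \eqref{n0tail}), one has $n\P(N\ge\eta C_n)=O(1)$, so \refL{L1} yields $n\P(|D_i|\ge\eta C_n)=o(1)$; hence with probability tending to $1$ no $D_i$ with $i\le n$ exceeds $\eta C_n$. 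A standard truncated first- and second-moment estimate, again using the lighter tail from \refL{L1}, bounds the centred truncated partial sums in $L^2$ uniformly in $t$ via Doob's inequality, giving $\sup_t|R_n(t)|\pto0$. Moreover $p_0B_n'/C_n$ converges to a constant $c$ (proportional tails plus regular variation). By \eqref{jcd} and a continuous-mapping/Slutsky argument the pair then converges jointly to $(Y,cY)$; but by \eqref{N0conv} its second coordinate is distributed as the nondegenerate stable process $Y$, and $cY\eqd Y$ forces $c=1$, so the joint limit is $(Y,Y)$ and the claim holds.

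When $\operatorname{Var}(N)<\infty$ (i.e.\ $\beta>3$, or $\beta=3$ with summable $\LL$) the vector $(N_i,N_i\oo)$ is square-integrable, and here I would invoke the bivariate invariance principle directly: by Cramér--Wold reduction to the one-dimensional Donsker theorem, the centred partial-sum process of $(N_i,N_i\oo)$, with the two coordinates normalised by $B_n'$ and $C_n$ respectively (both asymptotic to constant multiples of $\sqrt n$), converges in $\doi\times\doi$ to a two-dimensional Brownian motion with coordinate laws $Y$ and $\hat Y$. This is exactly the joint form of \eqref{jcd} and \eqref{N0conv}.

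The main obstacle is the negligibility $\sup_t|R_n(t)|\pto0$ in the infinite-variance case: this is the one step that genuinely uses \refL{L1}, and the delicate point is the uniform-in-$t$ control, since $D_i$ itself need not be square-integrable when $2<\beta<3$ --- one must combine the large-jump bound $n\P(|D_i|\ge\eta C_n)=o(1)$ with a maximal inequality for the truncated sums. The remaining ingredients (the scalar limit $p_0B_n'/C_n\to1$, the continuous-mapping step, and the finite-variance invariance principle) are routine.
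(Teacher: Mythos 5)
Your proposal is correct and follows essentially the same route as the paper: the decomposition $N_i^{(0)}=p_0N_i+D_i$, Lemma~\ref{L1} to show the remainder process is uniformly negligible at scale $B_n'$ in the infinite-variance case (so $\hat Y=Y$ and one may take $C_n=p_0B_n'$), and a bivariate Donsker/Cram\'er--Wold argument in the finite-variance case. The only cosmetic differences are that the paper controls the remainder via Feller's truncated moment functions plus Kallenberg's maximal theorem (or symmetrization and a stopping time) where you use truncation at $\eta C_n$ plus Doob's inequality, and that the paper simply defines $C_n=p_0B_n'$ rather than identifying the constant $c$ a posteriori.
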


\begin{pf}
Suppose first that
the offspring distribution \eqref{probox} has finite variance.
[This implies $\gb\ge3$ by \eqref{pi}.]
It then follows from
\eqref{pi} that $N$ and $N^{(0)}\le N$ have finite variances and
by a two-dimensional version of Donsker's theorem,
the result
follows with $2^{-1/2}Y_t$ and $2^{-1/2}\hat Y_t$ two different (dependent)
standard Brownian motions, and
$B'_n = \sqrt{\textrm{Var}(N) n /2}$,
$C_n = \sqrt{\textrm{Var}(N^{(0)}) n /2}$.

Suppose now instead that the variance of the offspring distribution is
infinite; then $\E N^2=\infty$. We follow \cite{FellerII}, Section XVII.5,
and let $\mu(x)$ be the truncated moment function
%
\begin{equation}
\mu(x) = \E\bigl(N^2 \mathbf1\{N\le x\}\bigr).
\end{equation}
Then $\mu(x)\to\infty$ as $x\to\infty$. Moreover,
by \cite{FellerII}, Theorem XVII.5.2 and\break XVII.(5.23),
$\mu(x)$ is regularly
varying with exponent $2-\ga=(3-\gb)\vee0$, and
\eqref{jcd} holds with
$n\mu(B_n')/(B_n')^2\to C$
for some constant $C$.

If we similarly define the truncated moment function,
%
\begin{equation}
\mu_1(x) = \E \bigl(\bigl(N^{(0)}-p_0 N
\bigr)^2 \mathbf1\bigl\{\bigl|N^{(0)}-p_0N\bigr|\le x\bigr
\} \bigr),
\end{equation}
it follows easily by \eqref{l1b} [and $\mu(x)\to\infty$] that, as
$x\to\infty$,
%
\begin{equation}
\mu_1(x) = o\bigl(\mu(x)\bigr)
\end{equation}
and thus
%
\begin{equation}
\frac{n\mu_1(B_n')}{(B_n')^2} =o\biggl(\frac{n\mu(B_n')}{(B_n')^2}\biggr)\to0,\qquad {n\to\infty}.
\end{equation}
It follows by minor modifications of the arguments in
\cite{FellerII}, Section XVII.5,
that
%
\begin{equation}
\frac{\sum_{i=1}^{n}(N_{i}^{(0)}-p_0N_i)}{B_n'}\pto0.
\end{equation}
Moreover, by \cite{Kallenberg}, Theorem~16.14,
or by symmetrization and a stopping time argument, it follows that
%
\begin{equation}
\sup_{0\le t\le1}\Biggl |\sum_{i=1}^{\lfloor n t \rfloor }
\bigl(N_{i}^{(0)}-p_0N_i
\bigr)\biggr|\Big/B_n'\pto0,
\end{equation}
and thus \eqref{jcd} implies that
\eqref{N0conv} holds jointly with
$\hat Y_t=Y_t$ and $C_n=p_0B_n'$.
(Note that $\hat Y=Y$ when the offspring variance is infinite, but not when
it is finite.)
\end{pf}

For the case \eqref{A2}, where $\mean=0$,
the proposition below follows immediately from
\cite{janson:2011}, Theorems 2.4--2.5 and Remark~2.9.

\begin{proposition} \label{p:super}
For $w_i = (i!)^\alpha$, $\alpha> 0$, the tree distributed by
$\tilde{\nu}_n$ has the following properties:
\begin{longlist}[(1)]
\item[(1)]
For $\alpha> 1$,
%
\begin{equation}
n-\Delta_n \mathop{\xrightarrow}_{n\rightarrow\infty}^{\inp} 0.
\end{equation}
For $\alpha= 1$,
%
\begin{equation}
n - \Delta_n \mathop{\xrightarrow}_{n\rightarrow\infty}^{\ind}
\operatorname{Pois}(1).
\end{equation}
For $\alpha< 1$
%
\begin{equation}
n - \Delta_n = O\bigl(n^{1-\alpha}\bigr)
\end{equation}
with probability tending to $1$ as $n\rightarrow\infty$.
\item[(2)]
%
\begin{equation}
\frac{N_n^\circ}{n} \mathop{\xrightarrow}_{n\rightarrow\infty
}^{\inp} 1.
\end{equation}
\item[(3)]
The vertex $s$ is the unique black child of the root $r$ and
%
\begin{equation}
\sup_{1\leq i \leq\Delta_n-1} N_{n,i}^\circ\leq\lfloor1/
\alpha \rfloor \vee1
\end{equation}
with probability tending to $1$ as $n\rightarrow\infty$.
\end{longlist}
\end{proposition}

The propositions above along with the correspondence between degrees of
faces in the planar maps and degrees of black vertices in the mobiles
show that a unique face of degree roughly equal to $(1-\mean)n$ appears
in the planar maps $M_n$ with probability tending to 1 as $n\rightarrow
\infty$.

\section{Label process on mobiles} \label{s:label}

Let $\theta_n$ be a random mobile distributed by $\tilde{\mu}_n$,
and denote
by $N^\circ_n$ the random number of white vertices in $\theta_n$.
Order the
white vertices in a lexicographical order $v_{0},v_{1},\ldots
,v_{N^\circ_n}$
(taking $v_{N^\circ_n} = v_{0}$). Again we do not write explicitly the
dependency of $v$ and $v_i$ on $n$. Define the label process $L_n\dvtx \{0,1,\ldots,N^\circ_n\} \rightarrow\mathbb{Z}$ by $L_n(i) =
\ell_n(v_{i})$. Extend $L_n$ to a function on $[0,N^\circ_n]$ by linear
interpolation.

Denote the set of continuous functions from $[a,b]$ to $\mathbb{R}$ by
$C([a,b])$ equipped with the topology of uniform convergence.
Let $\mathbf{b}$ be the standard Brownian bridge on $[0,1]$, starting and
ending at 0. We will in this section prove the following result.

\begin{theorem} \label{th:inv}
For the weights \eqref{A1} and \eqref{A2}, it holds that
%
\begin{equation}
\label{inv} \biggl(\frac{1}{\sqrt{2(1-\mean)n}}L_n\bigl(t
N^\circ_n\bigr) \biggr)_{0 \leq t
\leq
1} \mathop{
\xrightarrow}_{n\rightarrow\infty}^{\ind} \bigl(\mathbf {b}(t)
\bigr)_{0
\leq t \leq1}
\end{equation}
with convergence in distribution in
$C([0,1])$.
\end{theorem}

Since the label function encodes information on distances,
cf. (\ref{distances}), this result shows that the diameter of the maps grows
like $n^{1/2}$.
More precisely, we can translate Theorem~\ref{th:inv} to a result on
distances to
the marked vertex $\rho$.
Define the distance process $D_n\dvtx \{0,1,\ldots,N^\circ_n\} \rightarrow
\mathbb{Z}$ by $D_n(i)=d(v_i,\rho)$.
Extend $D_n$ to a function on $[0,N^\circ_n]$ by linear
interpolation, and then to a function on $\bbR$ with period $N^\circ_n$.
By \eqref{distances},
%
\begin{equation}\qquad
\label{dl} D_n(t)=L_n(t)-\ell_n(\rho)
=L_n(t)-\min_{0\le s\le N_n^\circ} L_n(s)+1,\qquad 0\le t\le
N_n^\circ.
\end{equation}
Further, let $v_{i_*}$ be the first white vertex (in our ordering) that
is a
neighbor of $\rho$, that is, $i_*=\min\{i\dvtx \ell_n(v_i)=\min_j \ell
_n(v_j)\}$.

\begin{theorem} \label{th:Dinv}
For the weights \eqref{A1} and \eqref{A2}, it holds that
%
\begin{equation}
\label{dinv} \biggl(\frac{1}{\sqrt{2(1-\mean)n}}D_n\bigl(t
N^\circ_n+i_*\bigr) \biggr)_{0
\leq t
\leq1} \mathop{
\xrightarrow}_{n\rightarrow\infty}^{\ind} \bigl(\mathbf {e}(t)
\bigr)_{0 \leq t \leq1}
\end{equation}
with convergence in distribution in
$C([0,1])$.
\end{theorem}

\begin{pf}
The minimum of $\mathbf{b}$ is a.s. attained at a unique point,
$\brmin$ say,
and $\brmin$ is
uniformly distributed on $[0,1]$;
moreover, by
Vervaat's
theorem \cite{vervaat:1979}, if this minimum is subtracted from
$\mathbf{b}$
and the bridge is shifted (periodically) such that the minimum is
located at
0 one obtains a standard Brownian excursion $\mathbf{e}$ on $[0,1]$;
see also \cite{biane:1986}.

By Skorohod's representation theorem, we may assume that the
convergence in~\eqref{inv} holds a.s. Since the minimum point $U$ is unique, it follows
that the minimum point $v_{i_*}/N^\circ_n$ of the left-hand side converges
to $U$ a.s.
(The minimum point $v_{i_*}$ is typically not unique.
We chose the first minimum
point, but any other choice would also converge to $U$ a.s.)
The desired convergence \eqref{dinv} now follows from~\eqref{dl}, \eqref{inv}
and Vervaat's theorem.
\end{pf}

We start by introducing some notation and proving a couple of lemmas
before proceeding to the proof of Theorem~\ref{th:inv}. Begin by
considering only the part of the label process which surrounds the
vertex $s$, a black vertex of maximum degree. Let $s_{0}$ be the white
parent of $s$ and let $s_{i}$ be its $i$th white child in clockwise
order from $s_{0}$, where $i=1,\ldots,\Delta_n$ with the convention
that $s_{\Delta_n} = s_{0}$. Define the function $L^\star_n\dvtx \{
0,1,\ldots,\Delta_n\} \rightarrow\mathbb{Z}$ by $L^\star_n(i) =
\ell
_n(s_{i})$. As before, extend $L^\star_n$ to a continuous function on
$[0,\Delta_n]$ by linear interpolation.

\begin{lemma} \label{l:inv1}
For the weights \eqref{A1} and \eqref{A2}, it holds that
%
\begin{equation}
\label{inv1} \biggl(\frac{1}{\sqrt{2(1-\mean)n}}L^\star_n(t
\Delta_n) \biggr)_{0
\leq t
\leq1} \mathop{\xrightarrow}_{n\rightarrow\infty}^{\ind}
\bigl(\mathbf {b}(t) \bigr)_{0 \leq t \leq1}
\end{equation}
with convergence in distribution in
$C([0,1])$.
\end{lemma}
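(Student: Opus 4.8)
The goal is to prove that the label process around the unique large black vertex $s$, rescaled by $(2(1-\mean)n)^{-1/2}$ and reparametrized on $[0,1]$ via $t\mapsto L^\star_n(t\Delta_n)$, converges in distribution to a standard Brownian bridge $\mathbf b$. The key structural facts are already in hand: by Proposition~\ref{p:scgw}\ref{en:bigv} (and Proposition~\ref{p:super}\ref{en:super:bigv} in case \eqref{A2}) we have $\Delta_n/n\pto 1-\mean$, so the large face has degree $\asymp 2(1-\mean)n$; and the white vertices $s_0,s_1,\dots,s_{\Delta_n-1}$ surrounding $s$ carry labels whose successive increments $\ell_n(s_{j+1})-\ell_n(s_j)$ are, conditionally on the tree, governed by the rule \eqref{labelrule} that they be $\ge -1$ and sum to $0$ around $s$.

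\textbf{Approach.} The plan is to condition on the tree $\tau_n$ (equivalently on $\Delta_n$ and on the whole shape), prove the conditional convergence of $L^\star_n$ to a Brownian bridge, and then unconditional convergence will follow because the limit does not depend on the conditioning apart from through $\Delta_n$, which itself converges. Concretely, conditionally on $s$ having degree $\Delta_n=d$, the label increments $X_j:=\ell_n(s_j)-\ell_n(s_{j-1})$, $j=1,\dots,d$, are distributed as i.i.d.\ shifted geometric variables $\xi_j$ (the variables with $\P(\xi=i)=2^{-i-2}$, $i\ge-1$, mean $0$) conditioned on $\sum_{j=1}^d \xi_j=0$. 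This is exactly the bridge structure: $L^\star_n(i)=\ell_n(s_0)+\sum_{j=1}^i X_j$ is the partial-sum process of a mean-zero random walk conditioned to return to its starting value at time $d$. First I would compute the variance of $\xi_1$: the shifted geometric with $\P(\xi=i)=2^{-i-2}$ has variance $2$, so $\sum_{j=1}^{d}\xi_j$ has variance $2d$, and with $d\approx(1-\mean)n$ this gives the normalizing constant $\sqrt{2(1-\mean)n}$ matching the statement.

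\textbf{Main steps.} I would invoke a functional central limit theorem for the bridge of a random walk with i.i.d.\ finite-variance steps: if $S_i=\sum_{j\le i}\xi_j$ then $\bigpar{d^{-1/2}(S_{\floor{dt}}\mid S_d=0)}_{0\le t\le1}$ converges in $C(\oi)$ to $\gss_\xi^{1/2}\mathbf b(t)$ with $\gss_\xi=2$; this is a standard consequence of Donsker's theorem together with the fact that the conditioned walk converges to the Brownian bridge (see e.g.\ the conditioned-walk literature, or derive it via the unconditioned Donsker theorem and the continuity of the conditioning map together with the local limit theorem ensuring the conditioning event has probability of order $d^{-1/2}$). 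Since $d=\Delta_n$ is random but $\Delta_n/n\to 1-\mean$ in probability, I would run this FCLT along the (conditionally deterministic) value of $\Delta_n$ and then transfer: using $\Delta_n^{-1/2}\cdot\sqrt{\Delta_n/(2(1-\mean)n)}=(2(1-\mean)n)^{-1/2}$ together with $\Delta_n/n\pto 1-\mean$, the scaling constants match in the limit, and $t\mapsto t\Delta_n/\Delta_n$ reparametrizes correctly. A clean way to make the random index rigorous is to note tightness of the family $\{(2(1-\mean)n)^{-1/2}L^\star_n(t\Delta_n)\}$ (which follows from the moment bound \eqref{ineqshat} of \refL{Lineqshat} applied to the sub-walk around $s$, exactly as in the proof of \refL{l:labeldist}) and to identify every subsequential limit as $\mathbf b$ by conditioning on $\Delta_n$ and applying the fixed-$d$ FCLT.

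\textbf{Expected obstacle.} The main subtlety is handling the \emph{random} number of steps $\Delta_n$ and the fact that we condition on the full tree, not just on $\Delta_n$: the increments around $s$ are conditioned on $S_{B_s}=0$, but the global conditioning $S_{B_v}=0$ for all black $v$ (present in the definition of $S^{\tau_n}_m$) couples $s$ with the rest of the tree. The resolution is that the label increments around different black vertices are independent before conditioning, and the increments around $s$ involve a disjoint block $B_s$; hence conditioning on $S_{B_s}=0$ alone fully describes the law of $L^\star_n$, and the conditioning at other black vertices is irrelevant to the sub-process around $s$. Thus the fixed-$d$ bridge FCLT applies cleanly conditionally on $\Delta_n=d$, and I would combine it with $\Delta_n/n\pto1-\mean$ via a standard subsequence-and-tightness argument (or Skorohod representation) to conclude \eqref{inv1}.
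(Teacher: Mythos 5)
Your proposal follows essentially the same route as the paper: identify $L^\star_n$ restricted to integers as a mean-zero random walk with shifted-geometric steps (variance $2$) conditioned to return to its starting value after $\Delta_n$ steps, apply a conditional (bridge) version of Donsker's theorem for fixed length, and transfer to the random length via $\Delta_n/n\pto 1-\mean$ (the paper cites \cite[Lemma 10]{bettinelli:2010} for the conditioned invariance principle and uses Skorohod representation for the random index, exactly as you suggest). Your observation that the conditioning events $S_{B_v}=0$ at the other black vertices are irrelevant to the block around $s$ is correct and is implicit in the paper's setup.

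One small step you gloss over: the limit $\mathbf b$ satisfies $\mathbf b(0)=0$, whereas your walk starts at $L^\star_n(0)=\ell_n(s_0)$, which is in general nonzero. To get \eqref{inv1} for $L^\star_n$ itself (rather than for the centered process $L^\star_n-\ell_n(s_0)$) you must also check that $n^{-1/2}\ell_n(s_0)\pto 0$. The paper does this explicitly, deducing it from Proposition \ref{p:scgw}\ref{en:finite} and Proposition \ref{p:super}\ref{en:super:sup}: the subtree $\tau_{n,0}$ containing $s_0$ stays stochastically bounded (respectively small), so the label of $s_0$ is $o_{\mathrm p}(n^{1/2})$. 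This is easy to supply, but it is a genuinely required ingredient and should be stated.
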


\begin{pf}
Let $\theta_n = (\tau_n,\ell_n)$ be a mobile distributed by
$\tilde{\mu}_n$. By Propositions~\ref{p:scgw}(1) and
\ref{p:super}(1),
$\Delta_n/n \mathop{\xrightarrow}^{\inp} 1-\mean$ as $n\rightarrow
\infty
$. Using
Skorohod's representation theorem, we may construct $\Delta_n$ and
$L_n$ on a
common probability space such that this convergence holds almost surely,
that is,
%
\begin{equation}
\label{degas} \Delta_n/n \mathop{\xrightarrow}_{n\rightarrow\infty}^{\as}
1-\mean.
\end{equation}
In the following, we will assume that this holds.

The label process $L^\star_n$, evaluated on the integers, is a random walk
of length $\Delta_n$ having jump probabilities $\omega(k) = 2^{-k-2}$,
$k=-1,0,1,\ldots,$ starting at $L^\star_n(0)=\ell_n(s_0)$ and
conditioned to
end at $\ell_n(s_0)$; see \cite{legall:2011}, Section~3.3. It
follows from
Propositions \ref{p:scgw}(3) and \ref{p:super}(3)
that
$n^{-1/2}\ell_n(s_0) \mathop{\xrightarrow}^{\inp} 0$ as
$n\rightarrow\infty
$. The
jump distribution has mean 0 and
variance $\sum_{k=-1}^\infty k^2 \omega(k) = 2$. The
result now follows by a conditional version of Donsker's invariance theorem;
see, for example, \cite{bettinelli:2010}, Lemma~10, for a detailed proof.
\end{pf}

\begin{lemma} \label{l:shift}
Let $f_n,g_n\dvtx A_n \to[0,\Delta_n]$ be random functions, for some
(possibly random) set $A_n$. If
%
\begin{equation}
\label{condfg} \sup_{x\in A_n} n^{-1}\bigl|f_n(x)-g_n(x)\bigr|
\pto0
\end{equation}
then
%
\begin{equation}
\label{supfg1} n^{-1/2}\sup_{x\in A_n}\bigl|L_n^{\star}
\bigl(f_n(x)\bigr)-L_n^{\star}\bigl(g_n(x)
\bigr)\bigr| \mathop{\xrightarrow}_{n\rightarrow\infty}^{\inp} 0.
\end{equation}
\end{lemma}

\begin{pf}
By the triangle inequality,
\begin{eqnarray*}
&&\bigl(2(1-\mean)n\bigr)^{-1/2}\sup_{x\in A_n}
\bigl|L_n^{\star
}\bigl(f_n(x)\bigr)-L_n^{\star}
\bigl(g_n(x)\bigr)\bigr |
\\
&& \qquad\leq\sup_{x\in A_n} \bigl|\mathbf{b}\bigl(f_n(x)/
\gD_n\bigr)-\mathbf {b}\bigl(g_n(x)/\gD_n
\bigr) \bigr|
\\
&&\qquad\quad{} + \sup_{x\in A_n} \bigl|\bigl(2(1-\mean)n
\bigr)^{-1/2}L_n^{\star
}\bigl(f_n(x)\bigr)-
\mathbf{b}\bigl(f_n(x)/\gD_n\bigr) \bigr|
\\
&& \qquad\quad{}+ \sup_{x\in A_n}\bigl |\bigl(2(1-\mean)n
\bigr)^{-1/2}L_n^{\star}\bigl(g_n(x)\bigr)-
\mathbf{b}\bigl(g_n(x)/\gD _n\bigr) \bigr|.
\end{eqnarray*}
The first term converges to zero in probability by (\ref{condfg}) and the
fact that
$\mathbf{b}$ is continuous on $[0,1]$, and hence uniformly continuous. The
other terms converge to zero by Lemma~\ref{l:inv1}, assuming as we may (by
Skorohod's representation theorem)
that~\eqref{inv1} holds a.s.
\end{pf}

\begin{lemma}\label{L4}
As ${n\to\infty}$,
%
\begin{equation}
n^{-1/2} \sup_{0 \leq i \leq\Delta_n-1} \sup_{v \in\tau_{n,i}}\bigl|
\ell_{n}(v)-\ell_{n}(s_i)\bigr| 
\pto0.
\end{equation}
\end{lemma}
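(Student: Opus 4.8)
The plan is to condition on the coloured tree $\tau_n$ and use that, given $\tau_n$, the labels are uniform, so that inside each subtree the label fluctuations are governed by \refL{l:labeldist}. Fix $p>2$, let $C(p)$ be the constant from that lemma, and set $M_{n,i}:=\sup_{v\in\tau_{n,i}}|\ell_n(v)-\ell_n(s_i)|$. Given $\tau_n$, an admissible labelling is obtained by choosing, independently around each black vertex, a uniform family of admissible increments summing to $0$; hence for $1\le i\le\Delta_n-1$ the family $\set{\ell_n(v)-\ell_n(s_i):v\in V^\circ(\tau_{n,i})}$ is, conditionally, exactly a uniform labelling of the mobile $\tau_{n,i}$ rooted at $s_i$, and \refL{l:labeldist} applied to $\tau_{n,i}$ gives $\E\bigpar{M_{n,i}^p\mid\tau_n}\le C(p)|\tau_{n,i}|^{p/2}$. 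For $i=0$ the reference vertex $s_0$ is an interior vertex of $\tau_{n,0}$ rather than its root $r$, but bounding $M_{n,0}\le 2\sup_v|\ell_n(v)-\ell_n(r)|$ and applying \refL{l:labeldist} to $\tau_{n,0}$ yields the same estimate with $C(p)$ replaced by $2^pC(p)$.

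Next I would apply a conditional union bound and Markov's inequality: for every $\eps>0$,
\begin{equation}
\P\Bigpar{n^{-1/2}\max_{0\le i\le\Delta_n-1}M_{n,i}>\eps\mid\tau_n}
\le\frac{2^pC(p)}{\eps^p n^{p/2}}\sum_{i=0}^{\Delta_n-1}|\tau_{n,i}|^{p/2}.
\end{equation}
Since $p/2>1$ and $\sum_{i=0}^{\Delta_n-1}|\tau_{n,i}|=n-\Delta_n\le n$, the sum is at most $n\bigpar{\max_i|\tau_{n,i}|}^{p/2-1}$, so the right-hand side is bounded by $R_n:=2^pC(p)\eps^{-p}\bigpar{n^{-1}\max_i|\tau_{n,i}|}^{p/2-1}$. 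If $\max_{0\le i\le\Delta_n-1}|\tau_{n,i}|=\op(n)$, then $R_n\pto0$; as the conditional probability is also at most $1$, bounded convergence gives $\P(n^{-1/2}\max_iM_{n,i}>\eps)\le\E[\min(R_n,1)]\to0$, which is the assertion.

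It remains to prove $\max_{0\le i\le\Delta_n-1}|\tau_{n,i}|=\op(n)$. For the weights \eqref{A2} this is immediate: there $\mean=0$ and $\max_i|\tau_{n,i}|\le\sum_{i=0}^{\Delta_n-1}|\tau_{n,i}|=n-\Delta_n=\op(n)$ by Proposition~\ref{p:super}\ref{en:super:bigv}, and in fact one may bound the sum in the display directly by $(n-\Delta_n)^{p/2}=\op(n^{p/2})$. For the weights \eqref{A1} I would treat $i\ge1$ and $i=0$ separately. The maximum over $1\le i\le\Delta_n-1$ is $\Op(B_n')=\op(n)$, where $B_n'=L_2(n)n^{1/(2\wedge(\gb-1))}$ is the normalising sequence in \eqref{kortweak}; this follows by passing from the functional limit \eqref{kortweak} to its largest jump, exactly as Proposition~\ref{p:scgw}\ref{en:sup} is deduced from Proposition~\ref{p:scgw}\ref{en:dist}. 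The remaining term $|\tau_{n,0}|$ is handled through the identity $\sum_{i=0}^{\Delta_n-1}|\tau_{n,i}|=n-\Delta_n$ together with \eqref{kortweak} at $t=1$ and $\Delta_n/n\to1-\mean$ (Proposition~\ref{p:scgw}\ref{en:bigv}): these give $\sum_{i=1}^{\Delta_n-1}|\tau_{n,i}|=\frac{\Delta_n}{1-\mean}-\Delta_n+\op(n)=n-\Delta_n+\op(n)$, whence $|\tau_{n,0}|=\op(n)$. This last point, controlling the part of the tree lying \emph{above} the condensation vertex, is the main obstacle: it is where the condensation geometry genuinely enters, and unlike the subtrees hanging below $s$ it is not covered by the sup version of \eqref{kortweak}.
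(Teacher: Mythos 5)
Your proof is correct and follows essentially the same route as the paper's: condition on $\tau_n$, apply Lemma~\ref{l:labeldist} subtree by subtree, and combine a union bound and Markov's inequality with control on the subtree sizes coming from Propositions \ref{p:scgw} and \ref{p:super}. The differences are only in the bookkeeping: the paper bounds $\sum_i (N^\circ_{n,i})^{p/2}\le n\sup_i(N^\circ_{n,i})^{p/2}$ and chooses $p$ large so that $\sup_i N^\circ_{n,i}=\op(n^{1-\delta})$ suffices, whereas your rearrangement $\sum_i|\tau_{n,i}|^{p/2}\le n\bigl(\max_i|\tau_{n,i}|\bigr)^{p/2-1}$ needs only $\max_i|\tau_{n,i}|=\op(n)$ for any fixed $p>2$; you derive $|\tau_{n,0}|=\op(n)$ from \eqref{kortweak} at $t=1$ where the paper simply cites Proposition \ref{p:scgw}\ref{en:finite} (which gives $|\tau_{n,0}|=O_{\mathrm p}(1)$ directly); and your explicit treatment of the $i=0$ case of Lemma~\ref{l:labeldist}, where the reference vertex $s_0$ is not the root of $\tau_{n,0}$, is a detail the paper glosses over.
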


\begin{pf}
Write the left-hand side as $n^{-1/2}K$.
Choose $\delta>0$ with $1-\delta>1/(2\wedge(\gb-1))$, and choose
$p>2/\delta$.
We condition on $\tau_n$
and obtain, by using Lemma~\ref{l:labeldist} for each subtree
$\tau_{n,i}$ separately,
%
\begin{eqnarray}
\label{qwe} \E\bigl(K^p\mid\tau_n\bigr) &\le&\sum
_{i=0}^{\gD_n-1}\E\sup_{v \in\tau_{n,i}}\bigl|
\ell _{n}(v)-\ell_{n}(s_i)\bigr|^p \le
\sum_{i=0}^{\gD_n-1}C(p) \bigl(N^\circ_{n,i}
\bigr)^{p/2}
\nonumber
\\[-8pt]
\\[-8pt]
\nonumber
& \le& C(p)n\sup_{0\le i <\gD_n} \bigl(N^\circ_{n,i}
\bigr)^{p/2}.
\end{eqnarray}
Then, by Propositions \ref{p:scgw}(3), (5)
and~\ref{p:super}(3),
%
\begin{equation}
\sup_{0\le i <\gD_n} {N^\circ_{n,i}}/n^{1-\delta}
\pto0,
\end{equation}
and thus, with probability tending to 1 as ${n\to\infty}$,
%
\begin{equation}
\label{qw2} \sup_{0\le i <\gD_n} {N^\circ_{n,i}}
\le n^{1-\delta}.
\end{equation}
If $\tau_n$ is such that \eqref{qw2} holds then \eqref{qwe}, along with
Markov's inequality, implies
that for any $\eps>0$,
%
\begin{eqnarray}
\P \bigl(K> \eps n^{1/2} \mid\tau_n \bigr)& \le&
\eps^{-p} n^{-p/2} C(p) n ^{1+(1-\delta)p/2}
\nonumber
\\[-8pt]
\\[-8pt]
\nonumber
&=&
\eps^{-p}C(p) n ^{1-\delta p/2}\to0.
\end{eqnarray}
Hence, $\P(K>\eps n^{1/2})\to0$, as asserted.
\end{pf}

\begin{pf*}{Proof of Theorem~\ref{th:inv}}
To unify the treatment of the cases (\ref{A1}) and (\ref{A2}), we
define $p_0 = 1$ for the weights in (\ref{A2}). By Lemma~\ref{l:inv1},
it is sufficient to show that
%
\begin{equation}
\label{sup1} n^{-1/2} \sup_{0\leq x\leq N_n^\circ}
\biggl|L_n^\star \biggl(x\frac
{\Delta
_n}{N_n^\circ} \biggr)-L_n
(x )\biggr | \mathop{\xrightarrow }_{n\rightarrow
\infty
}^{\inp} 0.
\end{equation}
Note that $L_n$ is a linear interpolation of its values on the integers.
Using the triangle inequality (and Lemma~\ref{l:shift})
therefore allows us to restrict to integer
values of $x$ which we will write as $k$. Introduce the mapping $\pi_n\dvtx \{0,1,\ldots,N_n^\circ\} \rightarrow\{0,1,\ldots,\Delta_n\}$
defined as
follows; see Figure~\ref{Fpi}:
Let $\pi_n(0) = 0$ and $\pi_n(N_n^\circ) = \Delta_n$. If $v_{i}\in
\tau_{n,j}$ for $j = 1,\ldots,\Delta_n-1$ then $\pi_n(i) = j$. If
$v_0 <
v_{i} \leq s_0$ in the lexicographic ordering then $\pi_n(i) = 0$ and if
$v_i \in\tau_{n,0}$ with $v_{i}>s_0$ then $\pi_n(i) = \Delta_n$.
%
\begin{figure}

\includegraphics{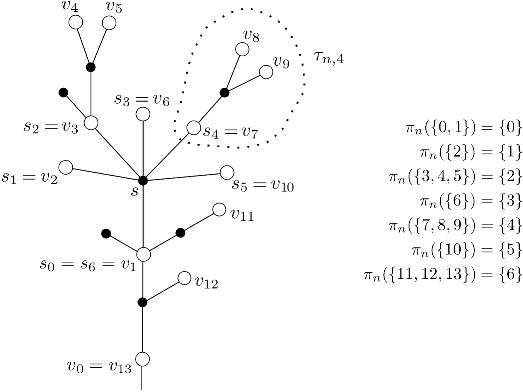}

\caption{An example of the mapping $\pi_n$.} \label{Fpi}
\end{figure}
By the triangle inequality,
%
\begin{eqnarray}
\label{pitri}&& n^{-1/2}\sup_{0\leq k\leq N_n^\circ} \biggl|L_n^\star
\biggl(k\frac{\Delta_n}{N_n^\circ} \biggr)-L_n (k )\biggr| \nonumber\\
&&\qquad\le n^{-1/2}
\sup_{0\leq k\leq N_n^\circ} \biggl|L_n^\star \biggl(k
\frac{\Delta_n}{N_n^\circ} \biggr) - L^\star _n\bigl(
\pi_n(k)\bigr)\biggr|
\\
&&\qquad\quad{}+ n^{-1/2}\sup_{0\leq k\leq N_n^\circ} \bigl|L_n(k) -
L^\star_n\bigl(\pi_n(k)\bigr)\bigr|.\nonumber
\end{eqnarray}
%
We begin by showing that the first term on the right-hand side of
(\ref{pitri}) converges to 0 in probability.
By Lemma~\ref{l:shift}, it suffices to show that
%
\begin{equation}
n^{-1}\sup_{0\leq k\leq N_n^\circ} \biggl|k\frac{\Delta_n}{N_n^\circ} -
\pi_n(k)\biggr| \pto0.
\end{equation}
%
We
have the estimate
%
\begin{equation}
\sum_{i=1}^{\pi_{n}(k)-1} N^\circ_{n,i}
\leq k \leq\sum_{i=0}^{\pi_{n}(k)}
N^\circ_{n,i}.
\end{equation}
Thus,
%
\begin{equation}
\Biggl|k - \sum_{i=1}^{\pi_{n}(k)} N^\circ_{n,i}\Biggr|
\le N^\circ_{n,0} + N^\circ_{n,\pi_n(k)}
\end{equation}
and hence, using
Propositions \ref{p:scgw}(3), (5)
and 
%
\begin{equation}
n^{-1}\sup_{0\le k\le N^\circ_n}\Biggl|k - \sum
_{i=1}^{\pi_{n}(k)} N^\circ_{n,i}\Biggr| \pto0.
\end{equation}

Furthermore, in view of
Propositions \ref{p:scgw}(1), (2)
and \ref{p:super}(1), (2),
$\gD_n/N^\circ_n\pto(1-\mean)/p_0$.
It
thus suffices to show that
%
\begin{equation}
\sup_{0\leq l \leq\Delta_n}n^{-1} \Biggl|\frac{1-\mean}{p_0}\sum
_{i=1}^{l} N^\circ_{n,i} - l\Biggr |
\mathop{\xrightarrow }_{n\rightarrow\infty
}^{\inp} 0,
\end{equation}
which indeed follows from Propositions \ref{p:scgw}(4)
and~\ref{p:super}(1).

Next, consider the second term on the right-hand side of (\ref{pitri}).
This is exactly the left-hand side in Lemma~\ref{L4}, and thus it to tends
to 0.
\end{pf*}

\section{Proof of Theorem \texorpdfstring{\protect\ref{th:main}}{1.1}} \label{s:proof}
We start by recalling standard results on the Gromov--Hausdorff
distance. A correspondence $\mathcal{R}$ between two metric spaces
$(E_1,d_1)$ and $(E_2,d_2)$ is a subset of $E_1 \times E_2$ such that
for every $x_1\in E_1$ there exists an $x_2\in E_2$ such that
$(x_1,x_2)\in\mathcal{R}$ and vice versa. Denote the set of all
correspondences between $E_1$ and $E_2$ by $\mathcal{C}(E_1,E_2)$. A
distortion of a correspondence is defined as
%
\begin{equation}
\operatorname{dis}(\mathcal{R}) = \sup\bigl\{ \bigl|d_1(x_1,y_1)-d_2(x_2,y_2)\bigr|
\dvtx (x_1,x_2),(y_1,y_2)\in
\mathcal{R}\bigr\}.
\end{equation}
The pointed Gromov--Hausdorff distance between $(E_1,d_1)$ and $(E_2,d_2)$
with marked points $\rho_1$ and $\rho_2$, respectively, can be conveniently
expressed as, see \cite{burago:2001}, Theorem~7.3.25 (for the nonpointed
version, the pointed version used here is similar)
%
\begin{equation}
\label{ghdis} d_{\mathrm{GH}}(E_1,E_2) =
\frac{1}{2}\inf_{\mathcal{R}\in\mathcal
{C}(E_1,E_2),(\rho_1,\rho_2)\in\mathcal{R}} \operatorname{dis}(\mathcal{R}).
\end{equation}

In the proof of Theorem~\ref{th:main}, we use similar ideas as in the
previous section. Let $M_n$ be a random planar map with a corresponding
mobile $\theta_n = (\tau_n,\ell_n)$. As before, we denote the white vertices
in $\theta_n$ by $v_{0},\ldots,v_{N^\circ_n}$ in lexicographical
order and
use the same notation for the corresponding white vertices in $M_n$. Also
define the vertex $s$ and its surrounding vertices
$s_{0},\ldots,s_{\Delta_N}$ as before. Denote by
$\theta^\star_n=(\tau_n^\star,\ell_n^\star)$ the mobile which is
obtained by
trimming $\theta_n$ such that it only consists of the black vertex $s$ and
its surrounding white vertices $s_i$, $0\leq i \leq N_n^\circ$, and keeping
the labels of these vertices the same as before.
We add a superscript $\star$ to the notation when we consider these
vertices as vertices in $\theta^\star_n$.
Take $s^\star_0$ to be the root of $\theta_n^\star$.
Note that if $L_n$ is the label process corresponding to $\theta_n$ then
$L^\star_n$, defined in Section~\ref{s:label}, is
the label process corresponding to $\theta_n^\star$.
By definition, it holds that $\ell_n^\star(s_{i}^\star) = \ell
_n(s_{i})$ for
all $0\leq i \leq\Delta_n$.
In general, the root $s^\star_0$ of $\theta_n^\star$ has a label different
from zero, but note that the BDG bijection still
works since it only depends on the
increments of the labels in the white contour sequence.
The planar map obtained from $\theta^\star_n$ is denoted by $M^\star
_n$, the
graph distance on $M^\star_n$ by $d_n^\star$ and the marked vertex by
$\rho^\star_n$.

The planar map $M^\star_n$ has a single black vertex and
has therefore a single face. Hence, it contains no cycles and is thus a
planar tree with $\Delta_n$ edges.
Given $\Delta_n$,
the map $M^\star_n$ is a uniformly distributed rooted planar tree and, given
$M^\star_n$, the marked vertex $\rho^\star_n$ is chosen uniformly at
random.
(Note that the root edge of $M_n^\star$ yields both a root vertex and an
ordering of the children of the root, and conversely; we may take the first
child to be the other endpoint of the root edge.)
Aldous~\cite{aldous:1993} proved that the contour function of such a random
rooted tree, after rescaling, converges in distribution to $\ex$,
which implies
convergence of the tree to $\mathcal{T}_\mathbf{e}$ in
Gromov--Hausdorff distance; see
\cite{legall:2005}, Theorem~2.5. Hence we obtain, including also the marked
vertex, the following.

\begin{theorem} \label{th:aldous}
For the weights \eqref{A1} and \eqref{A2}, the random planar map 
$ ((M^\star_n,\rho^\star_n ),(2(1-\mean)n)^{-1/2}d_n^\star)$
viewed as
an element of $\mathbb{M}^\ast$ converges in distribution toward
$ ((\mathcal{T}_\mathbf{e},\rho^\ast),\delta_\mathbf{e}
)$ where given
$\mathcal{T}_\mathbf{e}$, $\rho^*$ is a marked vertex chosen
uniformly at
random from $\mathcal{T}_\mathbf{e}$.
\end{theorem}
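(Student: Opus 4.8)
The plan is to identify $M_n^\star$, conditionally on its number of edges $\gD_n$, as a uniformly random rooted planar tree, and then to apply the classical invariance principle for conditioned Galton--Watson trees, transferring the result through the concentration of $\gD_n$ around $(1-\mean)n$.

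The first step is to check that, given $\gD_n$, the map $M_n^\star$ is uniform over all rooted planar trees with $\gD_n$ edges and that $\rho_n^\star$ is then a uniformly chosen vertex. This is where the combinatorics of the (restricted) BDG bijection enters: the mobile $\theta_n^\star$ carries only the single black vertex $s$, and under $\tilde\mu_n$ the labels around $s$ form a uniformly chosen admissible increment sequence (the configurations counted in \eqref{incr}). Since the BDG construction depends only on these increments, and since distinct admissible configurations correspond bijectively to distinct rooted planar trees with $\gD_n$ edges, the uniform law on increments pushes forward to the uniform law on trees; the identification of the root edge with a rooted, ordered structure noted in the text makes this precise.

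Next I would use that a uniformly random rooted planar tree with $m$ edges is exactly a critical Galton--Watson tree with geometric offspring law $p_k=2^{-k-1}$, $k\ge0$, conditioned to have $m$ edges; this law has mean $1$ and variance $\gss=2$. Aldous' theorem \cite{aldous:1993} then gives convergence of the rescaled contour function to the Brownian excursion $\ex$, which by \cite[Theorem 2.5]{legall:2005} upgrades to Gromov--Hausdorff convergence of the rescaled tree to $(\cTe,\delta_\ex)$. With $\gs=\sqrt2$ the correct metric normalisation for a tree with $m$ edges is $(2m)^{-1/2}$, which matches $(2(1-\mean)n)^{-1/2}$ because $\gD_n\sim(1-\mean)n$. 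Conditioning on $\gD_n=m$ and letting $m\to\infty$ (legitimate since $\gD_n\pto\infty$) therefore yields
\begin{equation}
\bigpar{(M_n^\star,\rho_n^\star),(2\gD_n)^{-1/2}d_n^\star}\dto\bigpar{(\cTe,\rho^\ast),\delta_\ex},
\end{equation}
where $\rho^\ast$ is $\mu$-uniform on $\cTe$, using the standard fact that a uniform vertex of a conditioned Galton--Watson tree converges jointly with the tree to a uniform point of the limiting CRT.

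Finally I would replace the random normalisation $(2\gD_n)^{-1/2}$ by the deterministic $(2(1-\mean)n)^{-1/2}$. Writing the ratio of the two constants as $\sqrt{\gD_n/((1-\mean)n)}$, which tends to $1$ in probability by Propositions~\ref{p:scgw}\ref{en:bigv} and \ref{p:super}\ref{en:super:bigv}, and using that the pointed Gromov--Hausdorff distance is homogeneous under rescaling of the metric while the rescaled diameters are tight, a factor tending to $1$ does not affect the limit. I expect the main obstacle to be the first step --- verifying carefully that the uniform law on admissible label increments around $s$ transports to the uniform law on planar trees and that $\rho_n^\star$ is genuinely uniform --- together with the clean handling of the random number of edges $\gD_n$, which is best organised by conditioning on $\gD_n$ and exploiting that the invariance principle holds uniformly as the number of edges diverges.
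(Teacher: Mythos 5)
Your proposal is correct and follows essentially the same route as the paper: observe that $M_n^\star$, having a single face, is a tree with $\Delta_n$ edges which, given $\Delta_n$, is a uniform rooted planar tree with a uniform marked vertex, and then invoke Aldous' invariance principle together with $\Delta_n/n\pto 1-\mean$ to get Gromov--Hausdorff convergence to $(\cTe,\rho^\ast)$ with the stated normalisation. The paper states the uniformity of $M_n^\star$ and the passage from the random $\Delta_n$ to the deterministic $(2(1-\mean)n)^{-1/2}$ without proof, so your counting of admissible label increments and your conditioning argument simply supply details the paper leaves implicit.
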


To complete the proof of Theorem~\ref{th:main}, we construct the following
correspondence between $((M_n,\rho_n),(2(1-\mean)n)^{-1/2}d_n)$ and
$((M^\star_n,\rho^\star_n),(2(1-\mean)n)^{-1/2}d_n^\star)$:
%
\begin{equation}
\mathcal{R}_n = \bigl\{\bigl(\rho_n,
\rho^\star_n\bigr)\bigr\} \cup\bigcup
_{i=0}^{N_n^\circ
-1} \bigl\{\bigl(v_{i},s_{\pi_n(i)}^\star
\bigr)\bigr\}
\end{equation}
with $\pi_n$ the same as in the proof of Theorem~\ref{th:inv}. We then
show that the distortion of this correspondence converges to zero in
probability. Recall the definition of $\tau_{n,i}$ in Section~\ref{s:anotherb}. We have the following estimate.

\begin{lemma} \label{l:dis}
For any mobile $\theta_n = (\tau_n,\ell_n)$ it holds that
%
\begin{equation}
\label{dis} \operatorname{dis}(\mathcal{R}_n) \leq\bigl(2(1-\mean)n
\bigr)^{-1/2} \Bigl(14 \sup_{0
\leq i \leq\Delta_n-1} \sup
_{v \in\tau_{n,i}}\bigl|\ell_{n}(v)-\ell _{n}(s_i)\bigr|+4
\Bigr).
\end{equation}
\end{lemma}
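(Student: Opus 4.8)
The plan is to control $\mathrm{dis}(\mathcal{R}_n)$ by running through the three possible types of pairs of elements of $\mathcal{R}_n$ and reducing everything to the single quantity $R:=\sup_{0\le i\le\Delta_n-1}\sup_{v\in\tau_{n,i}}\bigabs{\ell_n(v)-\ell_n(s_i)}$ on the right of \eqref{dis}. First I would record two elementary label estimates. The basic one is that every white vertex is close in label to the spine vertex it is sent to: by the definition of $\pi_n$ (and since $s_{\Delta_n}=s_0$, so that $\pi_n(i)\in\{0,\Delta_n\}$ both correspond to $s_0^\star$), one has $\bigabs{\ell_n(v_i)-\ell_n^\star(s_{\pi_n(i)}^\star)}\le R$ for all $i$. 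Comparing the two label minima then yields $\bigabs{\ell_n(\rho_n)-\ell_n^\star(\rho_n^\star)}\le R$: indeed $\min_k\ell_n(s_k)$ is a minimum over a subset of the white vertices, hence $\ge\min_k\ell_n(v_k)$, while the previous estimate forces $\min_k\ell_n(v_k)\ge\min_k\ell_n(s_k)-R$.

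With these two facts the pairs involving $\rho$ are immediate. The pair $(\rho_n,\rho_n^\star)$ with itself contributes $0$. For a pair $(\rho_n,\rho_n^\star)$, $(v_i,s_{\pi_n(i)}^\star)$, the exact distance formula \eqref{distances}, valid in both $M_n$ and $M_n^\star$, gives $d_n(\rho_n,v_i)=\ell_n(v_i)-\ell_n(\rho_n)$ and $d_n^\star(\rho_n^\star,s_{\pi_n(i)}^\star)=\ell_n^\star(s_{\pi_n(i)}^\star)-\ell_n^\star(\rho_n^\star)$; subtracting and applying the two estimates above bounds this contribution to the distortion by $2R$.

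The crux is the white--white pairs, i.e.\ bounding $\bigabs{d_n(v_i,v_j)-d_n^\star(s_a^\star,s_b^\star)}$ with $a=\pi_n(i)$, $b=\pi_n(j)$. Here map distances are not exact functions of the labels, so I would invoke the standard upper and lower bounds attached to the BDG bijection (successor geodesics for the upper bound, a cactus-type inequality for the lower bound; see \cite{legall:2011} and its references), which express $d_n(u,v)$ in terms of $\ell_n(u)$, $\ell_n(v)$ and the minimum of $\ell_n$ over a contour arc joining $u$ and $v$, up to an additive $O(1)$. The two comparisons that drive the estimate are: (i) the endpoint labels satisfy $\ell_n(v_i)=\ell_n(s_a)+O(R)$ and $\ell_n(v_j)=\ell_n(s_b)+O(R)$ by the first step; and (ii) the relevant contour-arc minimum of $\ell_n$ in $M_n$ and the corresponding arc minimum in the trimmed contour of $\theta_n^\star$ differ by at most $R$, because along the full arc the only white vertices off the spine lie in subtrees $\tau_{n,k}$, whose labels are within $R$ of $\ell_n(s_k)$. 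Since $M_n^\star$ is the tree coded by the label process $L^\star_n$, its own upper and lower bounds coincide up to $O(1)$ and pin $d_n^\star(s_a^\star,s_b^\star)$ to the same label functional; feeding (i) and (ii) into the $M_n$ bounds then gives $\bigabs{d_n(v_i,v_j)-d_n^\star(s_a^\star,s_b^\star)}\le C R$ for an absolute constant (the $O(1)$ terms being harmless once $R\ge1$, the only case of interest).

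Taking the maximum over the three pair types and multiplying by the scale factor $(2(1-\mean)n)^{-1/2}$ then gives \eqref{dis}, the constant $10$ being a safe upper bound for the combined constants. The hard part will be the white--white case: it is the only place where the actual geometry of the maps, rather than just the labels, enters, and the real work is to check that the contour arcs used for the distance bounds in $M_n$ and in the trimmed tree $M_n^\star$ are ``the same up to the hanging subtrees'', so that their label minima differ by at most $R$. The $\rho$-pairs, by contrast, are controlled exactly through \eqref{distances} and require nothing beyond the two label estimates.
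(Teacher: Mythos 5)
Your decomposition into pair types, the two preliminary label estimates (every white vertex is within $R$ of its spine image, and $|\ell_n(\rho_n)-\ell_n^\star(\rho_n^\star)|\le R$), the treatment of the pairs involving $\rho$ via the exact formula \eqref{distances}, and the upper-bound half of the white--white case (successor geodesics plus the observation that the contour-arc minimum in $\theta_n$ is within $R$ of the spine-arc minimum in $\theta_n^\star$) are all sound and run parallel to the paper. The gap is in the lower-bound half of the white--white case, which is the actual content of the lemma. The ``cactus-type inequality'' you invoke bounds $d_n(u,v)$ from below by $\ell_n(u)+\ell_n(v)-2\min\ell_n$ taken over the \emph{ancestral path in the mobile}, not over a contour arc. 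In the present hub-and-spoke geometry the path in $\tau_n$ from $v_i\in\tau_{n,a}$ to $v_j\in\tau_{n,b}$ goes up to $s_a$, straight across the high-degree black vertex $s$, and down to $v_j$; its white vertices all lie in $\tau_{n,a}\cup\tau_{n,b}$, so the cactus bound only yields $d_n(v_i,v_j)\ge|\ell_n(s_a)-\ell_n(s_b)|-O(R)$. The quantity you need to match is $d_n^\star(s_a^\star,s_b^\star)=\ell_n(s_a)+\ell_n(s_b)-2\min_{a\le m\le b}\ell_n(s_m)+2$, which exceeds the cactus bound by an amount of order $\sqrt n$ whenever the spine labels dip between $a$ and $b$ --- the typical situation, since they converge to a Brownian bridge. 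So the two ``standard'' bounds do not sandwich $d_n(v_i,v_j)$ to within $O(R)$ of the same label functional, and the reduction fails exactly where you flag the hard part.

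What replaces the cactus bound in the paper is a bespoke geometric argument (cases \ref{V2} and \ref{V3} of its proof): one takes an \emph{actual} geodesic $\gamma_{ij}$ from $s_i$ to $s_j$ in $M_n$ and shows, using the successor structure of the BDG bijection (the argument with the index $p$ with $\ell_n(s_p)=\ell_n(s_k)+1$ and the first crossing time $q$), that $\gamma_{ij}$ must contain a vertex $v_l$ with $\ell_n(s_k)+1-K\le\ell_n(v_l)\le\ell_n(s_k)$ and contour position between $i$ and $k$, hence within $2K$ of the spine vertex $s_k$ corresponding to the common ancestor $z^\star$ in $M_n^\star$. Splitting $d_n(s_i,s_j)=d_n(s_i,v_l)+d_n(v_l,s_j)$ at this vertex is what produces the lower bound; no off-the-shelf inequality is quoted for it. A secondary point: the lemma is a deterministic inequality with no additive constant, so the $O(1)$ slack you absorb ``once $R\ge1$'' does not prove the stated bound when $R$ is $0$ or $1$; the paper's estimates are arranged so that every error term is at most $K$ exactly (e.g.\ $\ell_n(z)=\lambda_1-1\ge\ell_n(s_j)-K$), though for the application to Theorem \ref{th:main} this would not matter.
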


By Lemma~\ref{L4}, the right-hand side tends to 0 in probability,
which along with Theorem~\ref{th:aldous} completes the proof of Theorem~\ref{th:main}. We conclude by proving Lemma~\ref{l:dis}.

\begin{pf*}{Proof of Lemma~\ref{l:dis}}
Let $(x,x^\star),(y,y^\star)\in\mathcal{R}_n$. Write
\[
K = \sup_{0 \leq i \leq\Delta_n-1} \sup_{v \in\tau_{n,i}}\bigl|\ell
_{n}(v)-\ell_{n}(s_i)\bigr|.
\]
When we refer to ancestral relations in $M_n^\star$ we use $\rho
_n^\star
$ as the reference point, that is, we say that $y$ is an ancestor of
$x$ in $M_n^\star$ if $x\neq y$ and the unique geodesic from $x$ to
$\rho_n^\star$ contains $y$. Consider separately the following three cases:
\begin{longlist}[(1)]
\item[(1)]
$x^\star= y^\star$.
\item[(2)]
$y^\star$ is an ancestor of $x^\star$ in $M_n^\star$,
or conversely.
\item[(3)]
$x^\star\neq y^\star$ and neither is an ancestor of the other.
\end{longlist}
Begin by studying case (1). The case $x^\star= y^\star= \rho
_n^\star$ is trivial so we consider $x^\star\neq\rho_n^\star$. We can
then write $x^\star= y^\star= s_{i}^\star$ for some $i$ which will be
fixed in this part of the proof. Let $\lambda_0$ denote the minimum
label in $\tau_{n,i}$. For a vertex $v\in V(M_n)$ define the successor
geodesic $\gamma(v)$ from $v$ to $\rho_n$ by $(v,\sigma(v),\sigma
\circ
\sigma(v),\ldots,\rho_n)$ with $\sigma$ defined in (\ref{succ}). Then
there is a vertex $w$ with label $\ell_n(w) = \lambda_0-1$ such that
$\gamma(x)$ and $\gamma(y)$ contain $w$. Therefore, it follows from
(\ref{distances}) and the definition of $K$ that
\begin{eqnarray*}
d_n(x,w) &=& \ell_n(x) - \lambda_0+1 \leq2K+1\quad
\mbox{and}
\\
d_n(y,w) &=& \ell_n(y) - \lambda_0+1
\leq2K+1.
\end{eqnarray*}
Thus, by the triangle inequality,
%
\begin{equation}
\label{estin1} \bigl|d_n(x,y)-d^\star_n
\bigl(x^\star,y^\star\bigr)\bigr| = d_n(x,y) \leq
d_n(x,w) + d_n(y,w) \leq4K+2.
\end{equation}

Next, consider case (2) and assume that $y^\star$ is an ancestor
of $x^\star$. First, assume that $y^\star\neq\rho_n^\star$. Then
there are unique $i$ and $j$ such that $x^\star= s^\star_{i}$,
$y^\star= s^\star_{j}$ and without loss of generality we assume that
$i < j$ (otherwise we shift the indices $i$ and $j$ modulo $\Delta_n$).
In this part, $i$ and $j$ are fixed. It holds that
%
\begin{equation}
\label{complabels1} \ell_n(s_{m}) > \ell_n(s_{j})
\end{equation}
for all $m$ obeying $i \leq m < j$. Let $\gamma_i$ be a successor
geodesic from $s_i$ to $\rho_n$ and let $\gamma_j$ be a successor
geodesic in $M_n$ from $s_j$ to $\rho_n$. We will show that the
distance between $\gamma_i$ and $s_{j}$ is small (in terms of $K$). Define
%
\begin{equation}
\label{lambda1def} \lambda_1 = \min\bigl\{\ell_n(v_{m})
\dvtx i \leq\pi_n(m) < j\bigr\}.
\end{equation}
It clearly holds that
%
\begin{equation}
\lambda_1 \leq\ell_n(s_{j-1})\le
\ell_n(s_{j})+1.
\end{equation}
Let $l$ be an index for which the minimum in (\ref{lambda1def}) is
attained, that is, such that $\ell_n(v_l) = \lambda_1$ and $i \leq
\pi
_n(l) < j$. Then, by (\ref{complabels1}),
%
\begin{equation}
\label{lam1est} \lambda_1 = \ell_n(v_{l})
\geq\ell_n(s_{\pi_n(l)}) - K \geq\ell _n(s_{j})+1-K.
\end{equation}
Now, $\gamma_i$ and $\gamma_j$ intersect for the first time at a vertex
with label $\lambda_1-1$, and call this vertex $z$; see Figure~\ref{f:case2}. Then by (\ref{distances}) and (\ref{lam1est})
%
\begin{equation}
\label{zest} d_n(z,s_{j}) = \ell_n(s_{j})-
\ell_n(z) \leq K.
\end{equation}
Furthermore, with the same argument leading to \eqref{estin1}
%
\begin{equation}
\label{xyest} d_n(x,s_{i}) \leq3K+2 \quad\mbox{and}\quad
d_n(y,s_{j}) \leq3K+2.
\end{equation}

\begin{figure}[t]

\includegraphics{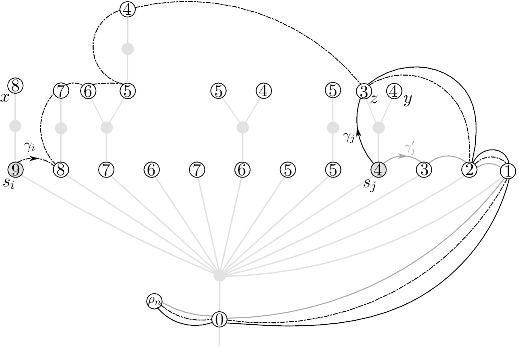}

\caption{Illustration of the setup in part (2) of the
proof with
$\ell_n(s_{j}) = 4$ and $\lambda_1 = 4$. A planar mobile is shown with
the edges and the black vertices colored light gray. The geodesic
$\gamma_i$ is black with dots and dashes and $\gamma_j$ is black and
solid. Since they are successor geodesics, they intersect at the vertex
$z$ with label $\lambda_1-1=3$. Another geodesic $\gamma_j'$ from
$s_{j}$ to $\rho_n$ (not a successor geodesic) is shown in dark gray
and it does not intersect $\gamma_i$ at a vertex with label $\lambda
_1-1$.} \label{f:case2}
\end{figure}
Finally, we get by repeatedly using the triangle inequality along with
(\ref{distances}), (\ref{zest}) and (\ref{xyest})
%
\begin{eqnarray}
\label{upper2}
\nonumber
\bigl|d_n(x,y)-d_n^\star
\bigl(x^\star,y^\star\bigr)\bigr| &\leq&\bigl |d_n(x,y)-d_n(s_{i},s_{j})\bigr|+
\bigl|d_n(s_{i},s_{j})-d_n(z,s_{i})\bigr|
\\
\nonumber
&&{} +\bigl |d_n(z,s_{i})-d_n^\star
\bigl(x^\star,y^\star\bigr)\bigr|
\\
&\leq& d_n(x,s_{i})+d_n(y,s_{j})
+ d_n(z,s_{j})
\\
\nonumber
&&{} +\bigl |\ell_n(s_{i})-\ell_n(z)-
\ell_n^\star\bigl(s_{i}^\star\bigr)+\ell
^\star _n\bigl(s^\star_{j}\bigr)\bigr|
\\
&\leq& 7K+4 + \bigl|\ell_n(s_{j})-\ell_n(z)\bigr|
\leq8K+4.\nonumber
\end{eqnarray}
The case $y^\star= \rho_n^\star$ is treated in a simpler way leading
to a
similar upper bound as in (\ref{upper2}); we omit the details.

Finally, consider case (3) (see Figure \ref{f:case3} for an illustration).
We keep writing $x^\star=
s_{i}^\star
$ and
$y^\star= s_{j}^\star$. Denote the common ancestor of $x^\star$ and
$y^\star$ having the largest label by $z^\star$. Assume that $z^\star
\neq
\rho_n^\star$ and write $z^\star= s_{k}^\star$; the case $z^\star=
\rho_n^\star$ is treated in a similar but simpler way.
We may assume without loss of
generality that $i < j < k$ (by shifting the indices modulo $\Delta_n$ and
possibly renaming $x$ and $y$). In this part, $i$, $j$ and $k$ are
fixed. Define the geodesics $\gamma_i$ and $\gamma_j$ as in case (2)
and let $\gamma_k$ be the successor geodesic from $s_k$ to
$\rho_n$. Furthermore,
let $\gamma_{ij}$ be a geodesic directed from $s_{i}$ to $s_{j}$ in
$M_n$. Since $s_{k}^\star$ is an ancestor of both $x^\star$ and
$y^\star
$, it
follows from (\ref{zest}) that there is a vertex $z_i$ in $\gamma_i$
and a
vertex $z_j$ in $\gamma_j$ such that
%
\begin{equation}
\label{zizj} d_n(z_m,s_{k}) =
\ell_n(s_{k})-\ell_n(z_m) \leq K\qquad
\mbox{for } m = i,j.
\end{equation}
Moreover,
%
\begin{equation}
\label{complabels2} \ell_n(s_{m}) > \ell_n(s_{k})
\end{equation}
for all $m$ obeying $i \leq m < k$. We now show that $\gamma_{ij}$ is
also close to $s_{k}$. Define
%
\begin{equation}
\label{lambda2} \lambda_2 = \min\bigl\{\ell_n(v_{m})
\dvtx v_{m} \in\gamma_{ij}, i \leq\pi _n(m) <
k\bigr\},
\end{equation}

\begin{figure}

\includegraphics{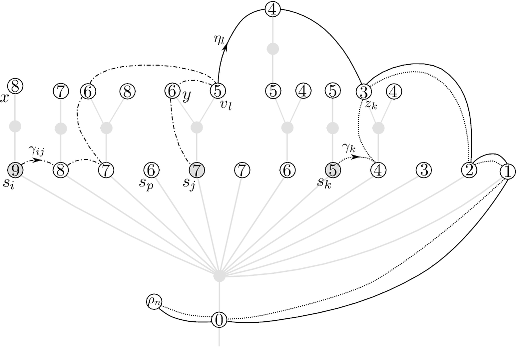}

\caption{Illustration of the setup in part (3) of the
proof with
$\ell_n(s_{k}) = 5$, $\lambda_2=5$ and $\lambda_3 = 4$. A~planar mobile
is shown with the edges and the black vertices colored light gray. The
geodesic $\gamma_{ij}$ is dotted and dashed, $\eta_l$ is solid and
$\gamma_k$ is dotted. Since $\eta_l$ and $\gamma_k$ are successor
geodesics, they intersect at the vertex $z_{k}$ with label $\lambda
_3-1=3$.} \label{f:case3}
\end{figure}

\noindent where by $v_{m} \in\gamma_{ij}$ we mean that $v_{m}$ is visited by
$\gamma_{ij}$.
Condition (3) guarantees that there is an index, say $p$, such
that $i \leq p < j$ and $\ell_n(s_p) = \ell_n(s_k)+1$. Let $q$ be the
first time at which $s_p < \gamma_{ij}(q) \leq s_k$ in the
lexicographic order on $\tau_n$. Then $q$ is well defined since
$\gamma
_{ij}$ ends at $s_j$.
If $\ell_n(\gamma_{ij}(q)) = \ell_n(\gamma_{ij}(q-1))+1$ then by the
properties of the BDG bijection $\ell_n(\gamma_{ij}(q))\leq\ell
_n(s_k)$. On the other hand, if $\ell_n(\gamma_{ij}(q)) = \ell
_n(\gamma
_{ij}(q-1))-1$ then by the same arguments $\ell_n(\gamma_{ij}(q-1))
\leq\ell_n(s_p)$ which again yields $\ell_n(\gamma_{ij}(q))\leq
\ell
_n(s_k)$. We have thus established that
%
\begin{equation}
\label{labelclaim} \lambda_2 \leq\ell_n(s_{k}).
\end{equation}
Let $l$ be an index for which the minimum in (\ref{lambda2}) is
attained, that is, such that $v_{l} \in\gamma_{ij}$, $i\leq\pi_n(l) <
k$ and $\ell_n(v_{l}) = \lambda_2$. By (\ref{complabels2}),
%
\begin{equation}
\label{lam2est} \lambda_2 = \ell_n(v_{l}) \geq
\ell_n(s_{\pi_n(l)})-K \geq\ell _n(s_{k})+1-K.
\end{equation}
Denote the successor geodesic from $v_{l}$ to $\rho_n$ by $\eta_l$.
Next, define
%
\begin{equation}
\lambda_3 = \min\bigl\{\ell_n(v_{m}) \dvtx
\pi_n(l) \leq\pi_n(m) < k\bigr\}.
\end{equation}
Now, $\eta_l$ and $\gamma_k$ intersect for the first time at a vertex having
label $\lambda_3 - 1$, and call this vertex $z_{k}$. With same argument
as in
\eqref{lam1est},
%
\begin{equation}
\label{l3ub} \lambda_3 \geq\ell_n(s_{k}) +
1 - K
\end{equation}
and this yields, along with (\ref{distances}) and (\ref{labelclaim})
%
\begin{equation}
\label{l2l3diff} d_n(v_{l},z_k) =
\ell_n(v_{l}) - \ell_n(z_k) =
\lambda_2-\lambda_3 + 1\leq K.
\end{equation}
Also, by (\ref{distances}) and (\ref{l3ub})
%
\begin{equation}
\label{dsnkzk} d_n(s_{k},z_k) =
\ell_n(s_{k})-\ell_n(z_k) \leq K.
\end{equation}
Using the triangle inequality along with (\ref{l2l3diff}) and (\ref
{dsnkzk}), we get
%
\begin{eqnarray}
\label{vnl} d_n(v_{\ell},s_{k}) &\leq&
d_n(v_{l},z_k)+d_n(z_k,s_{k})
\leq2K.
\end{eqnarray}

Finally, we obtain by using the triangle inequality, (\ref{distances}),
(\ref{xyest}), (\ref{zizj}) and~(\ref{vnl})
\begin{eqnarray*}
&& \bigl|d_n(x,y)-d_n^\star\bigl(x^\star,y^\star
\bigr)\bigr| \\
&&\qquad\leq \bigl|d_n(x,y)-d_n(s_{i},s_{j})\bigr|
\\
&&\qquad\quad{} +\bigl |d_n(s_{i},v_{l})-d_n(s_{i},s_k)\bigr|
+ \bigl|d_n(s_{i},s_k)-d_n(s_i,z_i)\bigr|
\\
&&\qquad\quad{} + \bigl|d_n(s_{j},v_{l})-d_n(s_{j},s_k)\bigr|
+\bigl |d_n(s_{j},s_k)-d_n(s_j,z_j)\bigr|
\\
&&\qquad\quad{} + \bigl|d_n(s_i,z_i)+d_n(s_j,z_j)-d_n^\star
\bigl(x^\star,y^\star\bigr)\bigr|
\\
&&\qquad \leq d_n(x,s_i)+d_n(y,s_j)+2d_n(s_k,v_l)
+ d_n(z_i,s_k) + d_n(z_j,s_k)
\\
&&\qquad\quad{} + \bigl|\ell_n(s_i)-\ell_n(z_i)+
\ell_n(s_j)-\ell_n(z_j)-
\ell^\star _n\bigl(s_i^\star\bigr)-
\ell^\star_n\bigl(s_j^\star\bigr)+2
\ell^\star_n\bigl(s_k^\star\bigr)\bigr|
\\
&&\qquad \leq 12K+4 + \bigl|\ell_n(s_k)-\ell_n(z_i)\bigr|+\bigl|
\ell_n(s_k)-\ell_n(z_j)\bigr|
\leq14K+4. 
\end{eqnarray*}
\upqed\end{pf*}

\section{Conclusions} \label{s:conc}
We have shown that the random planar maps defined by the weights (\ref{A1})
and (\ref{A2}) converge to Aldous' Brownian tree. It is interesting to note
that there does not seem to be a nontrivial scaling limit of the
corresponding simply generated trees; see \cite{kortchemski:2012},
Theorem~6,
and thus the labels play a crucial role in obtaining a scaling limit
for the
random maps.

One can also study the so-called local limit of the planar maps $M_n$
under consideration in this paper. The limit, when it exists, is an
infinite graph $M$ and convergence toward $M$ roughly means that one
considers all finite neighborhoods of faces around the root edge and
shows that the probability that they appear in the maps $M_n$
converges, as $n\rightarrow\infty$, to the probability that they
appear in $M$. Angel and Schramm \cite{angel:2003} studied local
convergence in the case of uniformly distributed triangulations (all
faces have degree 3) and later Durhuus and Chassaing \cite
{chassaing:2006} and Krikun \cite{krikun:2005} studied the case of
uniformly distributed quadrangulations (all faces have degree 4).
Recently, there have been several new results on the local limit of
uniform quadrangulations concerning, for example, properties of
infinite geodesics~\cite{curien:2012a}, random walks \cite
{benjamini:2012} and quadrangulations with a boundary \cite
{curien:2012b}. In a forthcoming paper \cite{bjornberg:2013}, it is shown that the local limit $M$ of the maps $M_n$
distributed by \eqref{mumeasure} exists for all choices of weights
$q_i$. The proof involves using the bijection $\mathcal{G}_n$
introduced in the current paper along with theorems on local
convergence of simply generated trees which we now briefly review.

The local limit of the simply generated trees corresponding to the weights
(\ref{A1}) and (\ref{A2}) was established in \cite{jonsson:2011}
(with an
asymptotically constant slowly varying function) and \cite{janson:2011},
respectively. Later it was established in full generality [covering cases
(\ref{cond:A1}) and (\ref{cond:A2})] in \cite{janson:2012}. In case
(\ref{cond:A2}), the local limit is deterministic and equals the infinite
star, that is, the root has a single neighbor of infinite degree and
all its
neighbors are leaves. Therefore, the local limit $M$ of the
corresponding planar maps is simply the infinite uniform planar tree. In
case (\ref{cond:A1}), the local limit of the trees is more
complicated. It
still has a unique vertex of infinite degree but the outgrowths from this
vertex are now i.i.d. subcritical Galton--Watson trees. Therefore, the
local limit $M$ of the corresponding maps is not a tree. However, since
subcritical Galton--Watson trees tend to be small, it is interesting
to see how different $M$ is from the uniform tree. It is, for example,
interesting to study
properties of random walks on $M$ since random walks are sensitive to
the presence of loops. In \cite{bjornberg:2013}, it is shown
(under some moment conditions on the weights $w_i$) that the spectral
dimension of $M$, a number which characterizes the rate of decay of the
return probability of the random walk, equals $4/3$ which is indeed the same
value as for the uniform infinite planar tree.

A natural question to ask is how universal our results are, that is, is it
enough to pose the conditions (\ref{cond:A1}) or (\ref{cond:A2}) in the
\hyperref[s:intro]{Introduction} or does one have to go to special cases? It is shown in
\cite{janson:2012}, Examples 19.37--19.39, that by choosing irregular
weights, still satisfying (\ref{cond:A1}) or (\ref{cond:A2}), the
corresponding simply generated trees with $n$ edges can have more than one
vertex with a degree of the order of $n$; it is even possible that the large
vertices have degrees $o(n)$ and that their number goes to infinity as
$n\rightarrow\infty$ (at least along subsequences).
In the case when there are two vertices with degrees
of the order of $n$, it is plausible that the planar maps have a scaling
limit which is roughly the Brownian tree with two points identified, forming
a second macroscopic face. The more there is of large vertices in the simply
generated trees the more faces should appear in the scaling limit of the
maps. Thus, we conjecture that the Brownian tree only appears in special
cases of~(\ref{cond:A1}) and (\ref{cond:A2}).
We consider, for simplicity, only one simple example
(similar to~\cite{janson:2012}, Example~19.38)
illustrating this.

\begin{example}
Let $(w_i)_{i\ge0}$ be a weight sequence such that
$w_i=0$ unless $i\in\{0,3^j\dvtx j\ge0\}$.
Further, let $w_0=1$ and let $w_{3^j}$ increase so rapidly that
\eqref{cond:A2} holds, and moreover,
with probability tending to 1 as $k\to\infty$,
if $n=3^k$, then
the simply generated random
tree $\tau_n$ with the distribution $\nu_n$
given by \eqref{nu}
is a star, while if $n=2\cdot3^k$, then $\tau_n$ has two vertices of
outdegree $n/2=3^k$ (and all other vertices are leaves).

For the subsequence $n=3^k$, we then obtain the same results as above in
the case \eqref{A2}.

For the subsequence $n=2\cdot3^k$,
the corresponding coloured tree distributed by $\tilde\nu$ has (with
probability tending to 1) two black vertices of degrees $n/2$ connected
by a
single white vertex $\vv$ of degree 2, and each of them joined to
$n/2-1$ white
leaves. For each choice of labels $\ell_n$, the corresponding map
$M_n$ thus
has two faces. The label processes around each black vertex converge to
independent Brownian bridges, which together with the random choice of root
implies that, in analogy to Theorem~\ref{th:inv},
%
\begin{equation}
\biggl(\frac{1}{\sqrt{n}}L_n\bigl(t N^\circ_n
\bigr) \biggr)_{0 \leq t \leq1} \mathop{\xrightarrow}_{n\rightarrow\infty}^{\ind}
\bigl(\mathbf {h}(t) \bigr)_{0
\leq t \leq1},
\end{equation}
where, for two Brownian bridges $\br_1,\br_2$ and $U$ uniformly
distributed on $[0,1]$, all independent,
%
\begin{equation}
\qquad\mathbf h(t)= %
\cases{\br_1(2t+U)-\br_1(U),
& \quad $0\le t\le(1-U)/2,$ \vspace*{2pt}
\cr
\br_2(2t+U-1)-
\br_1(U), &\quad $(1-U)/2\le t\le1-U/2,$ \vspace*{2pt}
\cr
\br_1(2t+U-2)-\br_1(U),&\quad  $1-U/2\le t\le1.$} %
\end{equation}
Moreover, the label process visits $\vv$, the unique white vertex of
degree 2,
twice. If we split this vertex into two, the corresponding map will be a
tree, which after normalization converges in distribution in the
Gromov--Hausdorff metric to a random real tree $\cT_{\h'}$, where $\h'$
is the
random function $h$ above shifted to its minimum and with the minimum
subtracted, so $\h'\ge0$ and $\h'(0)=0$.
We may by the Skorohod representation theorem assume that the
label processes converge a.s. Then the
random maps with $\vv$ split converge to $\cT_{\h'}$ a.s. in the
Gromov--Hausdorff metric,
with the two halves of $\vv$ corresponding to two different points in
$\cT_{\h'}$ [the points given by $t=(1-U)/2$ and $t=1-U/2$], and it follows
by combining the two parts of $\vv$ again, that the random maps $M_n$
converge to a limit that equals $\cT_{\h'}$ with these two points
identified. Note that this creates a cycle, so the limit is no longer a
tree.
(As a topological space, it is of the same homotopy type as a circle.)
\end{example}

\begin{appendix}\label{app}
\section*{Appendix: More on Galton--Watson trees}\label{AGW}
Marckert and Miermont \cite{marckert:2007} gave a description of the
distribution $\tilde\nu$ in \eqref{tnu} as a conditioned
two-type Galton--Watson tree, while we have used the bijection $\cG_n$ in
Section~\ref{s:anotherb} to obtain a simply generated tree (which in many
cases is
a conditioned Galton--Watson tree), with a single type only.
In this appendix, we give some further comments on the relation between
these two approaches.

Consider arbitrary weights $q_i\ge0$, $i\ge1$, assuming first only that
$q_i>0$ for some $i>1$ (to avoid trivialities), and define $w_i$ by
\eqref{eq:defp} (and $w_0=1$)
and their generating function $g(x)$ by \eqref{generating}.
Marckert and Miermont \cite{marckert:2007} define another generating
function $f(x)$ (denoted $f_{\mathbf q}(x)$ in \cite{marckert:2007}) by
%
\setcounter{equation}{0}
\begin{equation}
f(x)=\sum_{k=0}^\infty w_{k+1}x^k;
\end{equation}
thus
%
\begin{equation}
\label{gf} g(x)=1+xf(x).
\end{equation}

We have seen in Sections~\ref{s:intro} and \ref{s:anotherb} that a random
planar map in $\cM_n^*$ with Bolzmann weights \eqref{wm} corresponds
to a
random mobile $(\tau_n,\ell_n)$
(and a sign $\varepsilon$ that we ignore here), and that $\tau_n$
corresponds by the bijection $\cG_n$ to a random tree $\tau_n'$
that has the distribution of a simply generated tree with $|\tau_n'|=n$
edges, defined by the weights $(w_i)_{i\ge0}$, cf. \eqref{nu} [and note
that $\deg(v)-1$ is the outdegree, i.e., the number of children of
$v$; see
Section~\ref{s:bdg}].

We consider first trees with unrestricted number of edges.
We give a planar tree~$\tau$ the weight
%
\begin{equation}
\label{wtau} w(\tau)=\prod_{v\in V(\tau)}w_{\deg(v)-1}.
\end{equation}
The generating function
%
\begin{equation}
\label{G} G(x)=\sum_{\tau} x^{|\tau|+1} w(
\tau) 
\end{equation}
summing over all planar trees,
satisfies the well-known equation \cite{Otter}
%
\begin{equation}
G(x)=x g\bigl(G(x)\bigr).
\end{equation}
In particular, the total weight $Z=\sum_\tau w(\tau)=G(1)$ is finite
if and
only if the
equation
%
\begin{equation}
\label{G1} z=g(z)
\end{equation}
has a solution $z\in(0,\infty)$, and then $Z$ is the smallest positive
solution to \eqref{G1}.
Using \eqref{gf}, we can write \eqref{G1} as
$z=1+zf(z)$, or
%
\begin{equation}
\label{cl} f(z)=1-1/z,
\end{equation}
the form of the equation used in \cite{marckert:2007}.

If $Z=G(1)<\infty$ (such weights $q_i$ are called \emph{admissible} in
\cite{marckert:2007}), define
%
\begin{equation}
\label{hp} \hp_i=w_iZ^{i-1}.
\end{equation}
Then, by \eqref{G1},
%
\begin{equation}
\sum_{i=0}^\infty\hp_i =
Z^{-1}g(Z)=1,
\end{equation}
so $(\hp_i)_{i\ge0}$ is a probability distribution on $\{0,1,\ldots\}$.
Let $\tau'$ be a random Galton--Watson tree with this offspring distribution.
Then the probability of a particular realization $\tau'$ is
%
\begin{equation}
\label{gwp} \prod_{v\in V(\tau')} \hp_{\deg(v)-1} =
Z^{\sum_v(\deg(v)-2)}\prod_{v\in V(\tau')} w_{\deg(v)-1} =
Z^{-1}w\bigl(\tau'\bigr), 
\end{equation}
recalling that the number of vertices in $\tau'$
is $|\tau'|+1$ and that
$\sum_v\deg(v)=2|\tau'|+1$ since we count an extra half-edge at the root.
Hence, the distribution of the Galton--Watson tree $\tau'$ equals the
distribution given by the 
weights $w(\tau)$ on the set of all
planar trees.

\begin{remarkk}\label{Rlika}
The distribution $(p_i)_{i\ge0}$ defined by \eqref{hp} is not the
same as the
$(p_i)_{i\ge0}$ used in Section~\ref{s:anotherb}, so they define
different Galton--Watson
trees $\tau'$; however, they yield the same distribution $\nu_n$
when conditioned on a fixed size $n$ of the tree.
\end{remarkk}

Since $Z=\sum_\tau w(\tau)$, the sum of the probabilities \eqref
{gwp} over
all (finite) $\tau$ is~1; thus the Galton--Watson tree $\tau'$ is
a.s. finite,
which means that the offspring distribution has mean $\le1$, that is, the
Galton--Watson tree is subcritical or critical.
Conversely, we can obtain any subcritical or critical probability
distribution $(p_i)_{i\ge0}$ by taking $w_i=p_0^{i-1}p_i$; then
$w_0=1$ and
$Z=p_0^{-1}$.
(If we do not insist on $w_0=1$, we can simply take $w_i=p_i$.)

The offspring distribution \eqref{hp} has probability generating function
%
\begin{equation}
g_{\mathbf p}(x)= \sum_{i=0}^\infty
\hp_i x^i = Z^{-1}g(Zx)
\end{equation}
and thus mean
%
\begin{equation}
g_{\mathbf p}'(1)=g'(Z), 
\end{equation}
which by \eqref{gf} and \eqref{cl} can be written as
%
\begin{equation}
g_{\mathbf p}'(1)=f(Z)+Zf'(Z)=1+
\bigl(Z^2f'(Z)-1\bigr) /Z.
\end{equation}
Hence, the Galton--Watson tree is critical if and only if $g'(Z)=1$ or,
equivalently, $Z^2f'(Z)=1$ (the form used in \cite{marckert:2007}).
Moreover, the variance of the offspring distribution is
%
\begin{equation}
\gs^2= g_{\mathbf p}''(1)+g_{\mathbf p}'(1)-
\bigl(g_{\mathbf p}'(1)\bigr)^2 =Zg''(Z)+g'(Z)
\bigl(1-g'(Z)\bigr),
\end{equation}
which in the critical case $g'(Z)=1$ can be written by \eqref{gf} as
%
\begin{equation}
\gs^2 = Zg''(Z)=Z \bigl(Zf''(Z)+2f'(Z)
\bigr) = \bigl(Z^3f''(Z)+2 \bigr)/Z,
\end{equation}
which in the notation of \cite{marckert:2007} is $\rho_{\mathbf
q}/Z_{\mathbf q}$.

The two-type Galton--Watson tree defined by Marckert and Miermont
\cite{marckert:2007},
which we denote by $\tau$,
has a white root;
a white vertex has only black children, and the number of them has the
geometric distribution
$\Ge(\hp_0)= ((1-Z^{-1})^iZ^{-1} )_{i\ge0}$;
a black vertex has only white children, and the number of them has the
distribution
$ (\hp_{i+1}/(1-\hp_0) )_{i\ge0}
=
(\hp_{i+1}/(1-Z^{-1}) )_{i\ge0}$,
that is, the conditional distribution of $(\hxi-1\mid\hxi>0)$
if $\hxi$ has the distribution $(\hp_i)_{i\ge0}$.
Thus, the offspring distribution for the black vertices
has the probability generating function
%
\begin{equation}
\sum_{i=0}^\infty\frac{\hp_{i+1}}{1-Z^{-1}}
x^i = \sum_{i=0}^\infty
\frac{w_{i+1}Z^ix^i}{1-Z^{-1}} =\frac{f(Zx)}{1-Z^{-1}} =\frac{g(Zx)-1}{(Z-1)x}.
\end{equation}
A simple calculation
(which essentially is \cite{marckert:2007}, Proposition~7) shows that the
bijection in Section~\ref{s:anotherb}
maps this two-type Galton--Watson tree $\tau$ to the standard (single type)
Galton--Watson tree $\tau'$ with offspring distribution \eqref{hp}.
This can also be seen from the construction of the bijection;
see Figure~\ref{f:treetotree}. In particular, note that the children of the root in
$\tau$ are the vertices in the rightmost path from the root in~$\tau'$,
excluding its final leaf (and similarly for the children of other white
vertices); this explains why the offspring distribution for a white vertex
is geometric, since the length of the rightmost path in $\tau'$ obviously
has a geometric distribution.

Restricting to trees with $n$ edges (and thus $n+1$ vertices) we see,
by Remark~\ref{Rlika}, that
the tree $\tau_n'$ in Section~\ref{s:anotherb} with distribution $\nu_n$
can be seen as $\tau'$ conditioned on $|\tau'|=n$, and thus the corresponding
tree $\tau_n=\cG_n^{-1}(\tau'_n)$ has the same distribution as $\tau$
conditioned on $|\tau|=n$.

Although the Galton--Watson tree $\tau'$ is simpler than the two-type tree
$\tau$, the latter is more convenient for some purposes.
For example, when considering the white vertices, as we do in parts of
Section~\ref{s:anotherb}, it is immediate (by considering each second
generation)
that the number of white vertices in $\tau'$ is distributed as the total
progeny (number of vertices) in a Galton--Watson tree with offspring
distribution
%
\begin{equation}
\label{xi0} \xi^{(0)}=\sum_{j=1}^{\zeta}
\xi^*_j,
\end{equation}
where $\zeta\sim\Ge(\hp_0)=\Ge(1-Z^{-1})$ and
$\xi^*_j=(\hxi_j-1\mid\hxi_j>0)$ are
independent of each other and of $\zeta$, and each $\hxi_j$ has the
distribution $(\hp_i)_{i\ge0}$.
We have, letting $\kappa=\E\xi=\sum_i ip_i \le1$,
%
\begin{equation}
\label{exix} \E\xi^*_i = \E(\xi_i\mid
\xi_i>0) -1 =\frac{\E\xi_i}{1-p_0}-1=\frac{\mean+p_0-1}{1-p_0}
\end{equation}
and
%
\begin{equation}
\label{er} \quad\E\xi^{(0)}=\E\zeta\E\xi^*_1 =
\frac{1-p_0}{p_0}\frac{\mean
+p_0-1}{1-p_0} =\frac{\mean+p_0-1}{p_0} =1-\frac{1-\mean}{p_0}.
\end{equation}
Furthermore,
it is easy to see that $\xi^{(0)}$ has the probability generating function
%
\begin{equation}
\E x^{\xi^{(0)}} 
= \frac{\hp_0}{1-\sum_{k=1}^\infty\hp_k x^{k-1}}.
\end{equation}
Note that $\E\xi^{(0)}<1$ when $\E\xi<1$, which says that the white tree
consisting of each second generation in $\tau$ is subcritical if and
only if
$\tau'$ (or $\tau$) is.

Translated to $\tau'$, this shows immediately that the number of
leaves of
the Galton--Watson tree $\tau'$ with offspring distribution $\hxi$
is distributed as the total progeny of a Galton--Watson process with
offspring distribution $\xi^{(0)}$. In fact, this was shown
by Minami \cite{minami:2005}; one version of his argument is the following.
Given a tree $\tau$, we partition its vertex set into \emph{twigs} as
follows:
Take the vertices in lexicographic order and stop each time we reach a
leaf,
that is, the first twig consists of the root and all vertices up to, and
including, the first leaf; the second twig starts at the next vertex and
ends at the next leaf, and so on.
Thus, each twig ends with a leaf, and the number of twigs equals the number
of leaves.
If we start with a random Galton--Watson tree $\tau'$ with offspring
distribution $(\hp_i)$, the size of each twig has a geometric
distribution $1+\zeta$ with $\zeta\sim\Ge(\hp_0)$ as above.
Moreover, each nonleaf in the twig has further offspring distributed as
$\xi^*$; hence, if we contract each twig to a single vertex,
we obtain a new random Galton--Watson tree with
offspring distributed
as $\xi^{(0)}$; the number of vertices in this tree equals the number
of twigs
in $\tau'$, and thus the number of leaves in $\tau'$.

In fact, these two arguments are essentially the same; if we use instead
the reverse lexicographic order when defining the twigs, it is easy to see
that each twig in $\tau'$ correspond to a white vertex and its (black)
children in $\tau$.

We use this representation to verify the tail estimate \eqref{n0tail}.

\begin{lemmaa}\label{LN0}
Let $N^{(0)}$ be the number of leaves in a Galton--Watson tree with
offspring distribution $(p_i)_{i\ge0}$ satisfying $\kappa<1$ and
\eqref{probox} for some slowly
varying function $\LL(i)$.
Then, as ${n\to\infty}$,
%
\begin{equation}
\P\bigl(N^{(0)}=n\bigr)\sim c \LL(n) n^{-\gb},
\end{equation}
with $c={p_0}^{\gb-1} (1-\mean)^{-\gb}$.
\end{lemmaa}

\begin{pf}
We have seen that $N^{(0)}$ is distributed as the number of vertices in a
Galton--Watson tree with offspring distribution \eqref{xi0}.
By \eqref{jk} applied to a sequence $\xi_j^{(0)}$ of independent
copies of $\xi^{(0)}$,
%
\begin{equation}
\label{aw1} \P\bigl(N^{(0)}=n\bigr)=\frac{1}n\P\bigl(
\So_n=n-1\bigr),
\end{equation}
where
%
\begin{equation}
\label{aw2} \So_n = \sum_{j=1}^n
\xi^{(0)}_j \stackrel{\mathrm{d}} {=}\sum
_{i=1}^{X_n}\xi^*_i,
\end{equation}
where $X_n=\sum_{j=1}^n\zeta_j$ with $\zeta_j\sim\Ge(p_0)$ independent
of each other and of $\{\xi^*_i\}$.
[Thus, $X_n$ has a negative binomial distribution $\NBi(n,p_0)$.]
Note that $\E X_n = n\E\zeta_1=n(1-p_0)/p_0$, and that $X_n$ is strongly
concentrated about its mean; for example, moment convergence in the central
limit theorem for $X_n$ implies that
%
\begin{equation}
\label{aw3} \P\biggl(\biggl|X_n-\frac{1-p_0}{p_0}n\biggr|>n^{2/3}
\biggr) = O\bigl(n^{-b}\bigr)
\end{equation}
for any fixed $b$. Furthermore,
%
\begin{equation}
\label{aw4} \P\bigl(\xi^*_i=n\bigr)=(1-p_0)^{-1}p_{n+1}=
\bigl(1+o(1)\bigr) (1-p_0)^{-1}\LL(n)n^{-\gb}
\end{equation}
as ${n\to\infty}$, and thus, by a more general version of \eqref{sw}
applied to
$\xi^*_i$
and \eqref{exix},
uniformly for all $k$ with $|k-n(1-p_0)/p_0|\le n^{2/3}$,
%
\begin{eqnarray}
\label{aw5} \P\Biggl(\sum_{i=1}^{k}
\xi^*_i=n-1\Biggr) &=& k\bigl(1+o(1)\bigr)\P \bigl(
\xi^*_1=\bigl\lfloor n-k\E\xi^*_1-1\bigr\rfloor \bigr)
\nonumber\\
& =&\bigl(1+o(1)\bigr)\frac{n(1-p_0)}{p_0} \P \bigl(\xi^*_1=\bigl
\lfloor n(1-\mean )/p_0\bigr\rfloor+o(n) \bigr)
\\
& =&\bigl(1+o(1)\bigr)\frac{n}{p_0} \LL(n) \bigl(n(1-\mean)/p_0
\bigr)^{-\gb}.\nonumber
\end{eqnarray}
Choose $b= \beta+1$. By \eqref{aw1}--\eqref{aw5},
\begin{eqnarray*}
\P\bigl(N^{(0)}=n\bigr) &=& \frac{1}n \P\Biggl(\sum
_{i=1}^{X_n}\xi^*_i=n-1\Biggr) 
\\
& =&\bigl(1+o(1)\bigr){p_0}^{-1}\LL(n) \bigl(n(1-
\mean)/p_0 \bigr)^{-\gb}. 
\end{eqnarray*}
\upqed\end{pf}
\end{appendix}

\section*{Acknowledgements}
S. \"O. Stef\'ansson would like to thank J\'{e}r\'{e}mie Bouttier for telling him about
\cite{legall:2011} and for suggesting to relate the condensation phase in
simply generated trees to planar maps. We would also like to thank
Jakob Bj\"{o}rnberg for discussions and comments.

%

%




\printaddresses

\end{document}